\newtheorem{Theorem}{Theorem}[section]
\newtheorem{Corollary}[Theorem]{Corollary}
\newtheorem{Example}[Theorem]{Example}
\newtheorem{Definition}[Theorem]{Definition}
\numberwithin{equation}{section}
 \def\NN{{\mathbb N}}
 \def\HH{{\mathbb H}}
\def\frk{\mathfrak}  
 \def\nn{{\frk n}} \def\mm{{\frk m}}
\def\R{{\mathcal R}}  
\def\opn#1#2{\def#1{\operatorname{#2}}} 
\opn\chara{char} \opn\length{\ell} \opn\pd{pd} \opn\rk{rk}
\opn\projdim{proj\,dim} \opn\injdim{inj\,dim}
\opn\rank{rank} \opn\depth{depth} \opn\grade{grade} 
\opn\hei{ht} \opn\embdim{emb\,dim}\opn\codim{codim}
\opn\Tr{Tr} \opn\bigrank{big\,rank}
\opn\superheight{superheight} \opn\lcm{lcm}
\opn\rdim{rdim} \opn\trdeg{tr\,deg} \opn\reg{reg}  \opn\lreg{lreg} 
\opn\ini{in} \opn\lpd{lpd} \opn\size{size} \opn{\mult}{mult}
\opn\div{div} \opn\Div{Div} \opn\cl{cl} \opn\Cl{Cl}
\opn\Spec{Spec} \opn\Supp{Supp} \opn\supp{supp} 
\opn\Sing{Sing} \opn\Ass{Ass} \opn\Min{Min}
\opn\Proj{Proj} \opn{\Max}{Max} \opn{\Assh}{Assh}
\opn\Ann{Ann} \opn\Rad{Rad} \opn\Soc{Soc}
\opn\Syz{Syz} \opn\Im{Im} \opn\Ker{Ker} \opn\Coker{Coker}
\opn\Am{Am} \opn\Hom{Hom} \opn\tor{Tor} \opn\Ext{Ext}
\opn{\h}{{\bf h}}
\opn{\HH}{\text{H}}
\begin{document}

\title[]{Generalized Hilbert-Kunz function of the Rees algebra of the face ring of a simplicial complex}

\author[]{Arindam Banerjee}
\address{Ramakrishna Mission Vivekananda Educational and Research Institute, Belur, India}
\email{123.arindam@gmail.com}

\author[]{Kriti Goel}
\address{Indian Institute of Technology Bombay, Mumbai, India 400076}
\email{kritigoel.maths@gmail.com}

\author[]{J. K. Verma}
\address{Indian Institute of Technology Bombay, Mumbai, India 400076}
\email{jkv@math.iitb.ac.in}


\thanks{{\it 2010 AMS Mathematics Subject Classification:} Primary 13A30, 13D40, 13F55.}
\thanks{{\it Key words and phrases}: Generalized Hilbert-Kunz function, Generalized Hilbert-Kunz multiplicity, Stanley-Reisner ring.}

\begin{abstract}
	Let  $R$ be the face ring of a simplicial complex of dimension $d-1$ and 
	${\mathcal R}(\nn)$ be the Rees algebra of the maximal homogeneous ideal $\nn$ of $R.$ We show that the generalized Hilbert-Kunz function $HK(s)=\ell({\R}(\nn)/(\nn, \nn t)^{[s]})$  is given by a polynomial for all large $s.$ We calculate it in many examples and also provide a Macaulay2 code for computing $HK(s).$ 
\end{abstract}

\maketitle

\centerline{\em Dedicated to Roger Wiegand and Silvia Wiegand on the occasion of their  $150^{th}$ birthday}
\section{Introduction}
The objective of this paper is to find the generalized Hilbert-Kunz function 
of the maximal homogeneous ideal of the Rees algebra of the maximal homogeneous
ideal of the face ring of a simplicial complex. The Hilbert-Kunz functions of the Rees algebra, associated graded ring  and the extended Rees algebra have been studied by K. Eto and K.-i. Yoshida in \cite{ey} and by K. Goel, M. Koley and J. K. Verma in \cite{gkv}. 

In order to recall one of the main results of Eto and Yoshida, we set up some notation first. Let $(R,\mm)$ be a
Noetherian local ring of dimension $d$ and of prime characteristic $p.$  Let $q=p^e$ where $e$ is a non-negative integer. The $q^{th}$ Frobenius power of an ideal $I$ is defined to be $I^{[q]}=(a^q\mid a\in I).$ Let $I$ be an $\mm$-primary ideal. The Hilbert-Kunz function of $I$ is the function $HK_{I}(q)=\ell(R/I^{[q]}).$ This function, for $I=\mm$, was introduced by E. Kunz in \cite{k} who used it to characterize regular local rings. The Hilbert-Kunz multiplicity of an $\mm$-primary ideal $I$ is defined as $e_{HK}(I)=\lim_{q\to \infty}\ell(R/I^{[q]})/q^d.$ It was introduced by P. Monsky in \cite{pm}. We refer the reader to an excellent survey paper  of C. Huneke \cite{h} for further details. Let $e(I)$ denote the Hilbert-Samuel multiplicity of $R$ with respect to $I.$ We write $e(\mm)=e(R)$
for a local ring $(R,\mm)$ and $e_{HK}(R)=e_{HK}(\nn)$ where $\nn$ is the unique maximal homogeneous ideal of a graded ring $R.$

Eto and Yoshida calculated the Hilbert-Kunz multiplicity of various blowup algebras of an ideal under certain conditions.  Put  $c(d) = (d/2)+ d/(d+1)!.$ They proved the following. 

\begin{Theorem}
	Let $(R,\mm)$ be a Noetherian local ring of prime characteristic $p > 0$ with $d =\dim R \geq 1.$ Then for any $\mm$-primary ideal $I$, we have
	\[ e_{HK}(\R(I)) \leq c(d) \cdot e(I). \]
	Moreover, equality holds if and only if $e_{HK}(R)=e(I).$ When this is the case, $e_{HK}(R)=e(R)$ and $e_{HK}(I)=e(I).$ Here $e_{HK}(R)=e_{HK}(\mm)$ and $e(R)=e(\mm).$
\end{Theorem}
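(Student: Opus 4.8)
\emph{Strategy.} The plan is to reduce both assertions to a single explicit computation — namely $e_{HK}(\R(\mm_S)) = c(d)$ for a regular local ring $(S,\mm_S)$ of dimension $d$ — and then to deduce everything else by soft comparisons. Write $\Mm = \mm\R(I) + (It)\R(I)$ for the maximal homogeneous ideal of $\R(I)$, so $\dim\R(I) = d+1$ and $\Mm^{[q]} = \mm^{[q]}\R(I) + (I^{[q]}t^q)\R(I)$. After the faithfully flat base change $R \rightsquigarrow R[x]_{\mm R[x]}$ and passage to the completion — neither of which affects $e(I)$, $e_{HK}(R)$ or $e_{HK}(\R(I))$ — I may assume $R$ is complete with infinite residue field $k$; I also assume $R$ equidimensional, the general case following by running the associativity formula for multiplicities over the top-dimensional minimal primes. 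Then $I$ has a minimal reduction $J = (a_1,\dots,a_d)$, a system of parameters with $e(J) = e(I)$, and $S := k[[a_1,\dots,a_d]] \subseteq R$ is a module-finite extension with $\rank_S R = e(J) = e(I)$.

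\emph{The inequality.} Set $\mathfrak{M}' := \Mm_{\R(\mm_S)}\R(I) = (J,\,Jt)\R(I) \subseteq \Mm$. From $\sqrt{J} = \mm$ and $I^{k+1} = JI^k$ ($k \gg 0$) one checks $\sqrt{\mathfrak{M}'} = \Mm$, so $\mathfrak{M}'$ is $\Mm$-primary; also $\R(I)$ is module-finite over $\R(\mm_S)$, and comparing fraction fields ($\operatorname{Frac}\R(I) = \operatorname{Frac}(R)(t)$, $\operatorname{Frac}\R(\mm_S) = \operatorname{Frac}(S)(t)$) gives $\rank_{\R(\mm_S)}\R(I) = \rank_S R = e(I)$. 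Since $(\mathfrak{M}')^{[q]} = \Mm_{\R(\mm_S)}^{[q]}\R(I)$, monotonicity of Hilbert--Kunz colengths under inclusion of ideals together with the associativity formula (valid as $\R(\mm_S)$ is a domain) gives
\[
	e_{HK}(\R(I)) = e_{HK}(\Mm) \;\le\; e_{HK}(\mathfrak{M}') \;=\; e_{HK}\big(\Mm_{\R(\mm_S)},\ \R(I)\big) \;=\; e(I)\cdot e_{HK}\big(\R(\mm_S)\big),
\]
and the inequality follows once $e_{HK}(\R(\mm_S)) = c(d)$ is known.

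\emph{The regular computation.} For $S = k[[y_1,\dots,y_d]]$ I would use the grading: the degree-$n$ component of $\R(\mm_S)/\Mm_{\R(\mm_S)}^{[q]}$ is $\mm_S^n/\mm_S^{[q]}\mm_S^n$ for $0 \le n < q$ and $\mm_S^n/\mm_S^{[q]}\mm_S^{\,n-q}$ for $n \ge q$, the latter vanishing once $n \ge d(q-1)+1$ by a pigeonhole count on exponent vectors. Hence $\ell(\R(\mm_S)/\Mm_{\R(\mm_S)}^{[q]})$ is a finite sum of colengths of monomial ideals in $S$; dividing by $q^{d+1}$ and letting $q\to\infty$ turns this sum into a Riemann integral in which each normalized colength becomes the Euclidean volume of an explicit region cut out of $[0,1]^d$ by hyperplanes $\sum y_i = \mathrm{const}$. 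A direct evaluation of the integral gives $\tfrac{d}{2} + \tfrac{d}{(d+1)!} = c(d)$. This elementary but somewhat lengthy calculation is the single genuine computation in the argument, and is where I expect the main work to lie.

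\emph{The equality case.} If $e_{HK}(\R(I)) = c(d)e(I)$, then $e_{HK}(\Mm) = e_{HK}(\mathfrak{M}')$ with $\mathfrak{M}' \subseteq \Mm$ both $\Mm$-primary, so — $\R(I)$ being formally equidimensional — the Hochster--Huneke characterization of equality of Hilbert--Kunz multiplicities forces $\Mm \subseteq (\mathfrak{M}')^*$; extracting degree-zero components (equivalently, applying the ring retraction $\R(I) \twoheadrightarrow R$) gives $\mm \subseteq J^*$ in $R$, whence $e_{HK}(R) = e_{HK}(\mm) = e_{HK}(J) = e(J) = e(I)$, using that tight closure preserves Hilbert--Kunz multiplicity and that $e_{HK}$ of a parameter ideal is its Hilbert--Samuel multiplicity. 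Conversely, $e_{HK}(R) = e(I) = e(J)$ gives $\mm \subseteq J^*$ in $R$ by the same characterization; persistence of tight closure then gives $\mm\R(I) \subseteq (J\R(I))^*$, and from $I \subseteq \overline{J} \subseteq J^*$ one gets $(It)\R(I) \subseteq (\mathfrak{M}')^*$ as well, so $\Mm \subseteq (\mathfrak{M}')^*$ and hence $e_{HK}(\R(I)) = e_{HK}(\mathfrak{M}') = e(I)\,e_{HK}(\R(\mm_S)) = c(d)e(I)$. Finally the stated consequences are immediate: $e_{HK}(R) \le e(R) \le e(I)$ forces $e_{HK}(R) = e(R) = e(I)$, and $e_{HK}(R) = e_{HK}(\mm) \le e_{HK}(I) \le e(I)$ forces $e_{HK}(I) = e(I)$.
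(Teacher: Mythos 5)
The paper does not prove this statement: it is quoted from Eto and Yoshida \cite{ey} in the introduction and used as motivation, so there is no in-paper proof to compare against. Judged on its own terms, your strategy is sound and is, in outline, the natural one: pass to a minimal reduction $J=(a_1,\dots,a_d)$ of $I$, realize $\R(I)$ as a module-finite extension of $\R(\mm_S)$ for $S=k[[a_1,\dots,a_d]]$ of rank $e(I)$, use monotonicity of Hilbert--Kunz colengths for $(J,Jt)\R(I)\subseteq(\mm,It)\R(I)$ together with the associativity formula to reduce the inequality to the identity $e_{HK}(\R(\mm_S))=c(d)$, and settle the equality case with the Hochster--Huneke tight-closure criterion. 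The graded decomposition you propose for the regular computation is correct (the degree-$n$ piece of $\R(\mm_S)/\Mm_{\R(\mm_S)}^{[q]}$ vanishes for $n\ge d(q-1)+1$, and the normalized sum does converge to $d/2+d/(d+1)!$; the cases $d=1,2$ can be checked by hand against Corollary \ref{corF(n)}), but this is the one genuinely quantitative step of the whole theorem and you have deferred it: as written, your argument establishes only $e_{HK}(\R(I))\le e(I)\cdot e_{HK}(\R(\mm_S))$ rather than the stated bound. Two smaller points to shore up: (i) the ``equality implies containment in tight closure'' direction of the Hochster--Huneke criterion requires $\R(I)_{\Mm}$ to be formally equidimensional, which you should verify after your reduction to $R$ complete and equidimensional (the minimal primes of $\R(I)$ are the $\pp R[t]\cap\R(I)$ for $\pp\in\Min(R)$, each with quotient of dimension $d+1$); and (ii) ``applying the ring retraction $\R(I)\twoheadrightarrow R$'' is not literally how one extracts $\mm\subseteq J^{*}$ from $\Mm\subseteq\big((J,Jt)\R(I)\big)^{*}$, since tight closure does not commute with quotients --- but the intended argument goes through if you choose the test element $c$ in $R^{\circ}$ (degree zero, legitimate because the minimal primes of $\R(I)$ contract to minimal primes of $R$) and compare degree-zero components of the relation $c\,x^{q}\in(J^{[q]},J^{[q]}t^{q})\R(I)$.
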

It is natural to ask if there is a formula for the Hilbert-Kunz function and the Hilbert-Kunz multiplicity of the maximal homogeneous ideal $(\mm,It)$ of the Rees algebra ${\mathcal R}(I)=\oplus_{n=0}^\infty I^nt^n$  where $I$ is an $\mm$-primary ideal, in terms of invariants of the ideals $\mm$ and $I.$ 
In this paper we answer this question for the Rees algebra of the maximal homogeneous  ideal of the face ring of a simplicial complex.  In fact, we find its generalized Hilbert-Kunz function. The generalized Hilbert-Kunz function was introduced by Aldo Conca in \cite{conca}. Let $(R,\mm)$ be a $d$-dimensional Noetherian local  (resp. standard graded)  ring with maximal (resp. maximal homogeneous) ideal $\mm$  and $I$ be an $\mm$-primary (resp. a graded $\mm$-primary) ideal. Fix a set of generators of $I$, say $I=(a_1, a_2,\ldots, a_g).$ We choose these as homogeneous elements in case $R$ is a graded ring. Define the $s^{th}$ Frobenius power of $I$ to be the ideal $I^{[s]}=(a_1^s, a_2^s, \ldots, a_g^s).$ The generalized Hilbert-Kunz function of $I$ is defined as $HK_I(s)=\ell(R/I^{[s]}).$ The generalized Hilbert-Kunz multiplicity is defined as $\lim\limits_{s \rightarrow \infty} HK_I(s)/s^d$, whenever the limit exists.

We now describe the contents of the paper. Let $\Delta$ be a simplicial complex of dimension $d-1.$ Let $k$ be any field, $k[\Delta]$ denote the face ring of $\Delta$ and $\nn$ be its maximal homogeneous ideal. Let $\R(\nn)=\oplus_{n=0}^\infty \nn^nt^n$ be the Rees algebra of $\nn.$  In section 2, we collect some preliminaries required for estimation of the asymptotic reduction number in terms of the $a$-invariants of local cohomology modules and Hilbert-Samuel polynomial of the maximal  homogeneous ideal of the face  ring of a simplicial complex. Section 3 is devoted to the  computation of the generalized Hilbert-Kunz function $HK_{(\nn, \nn t)}(s)$, where $(\nn, \nn t)$ is the maximal homogeneous ideal of the Rees algebra ${\mathcal R}(\nn).$  We also estimate an upper bound on the postulation number of $HK_{(\nn, \nn t)}(s)$ in terms of $a$-invariants of the local cohomology modules. This enables us to explicitly calculate the generalized Hilbert-Kunz function for the Rees algebra in several examples such as the edge ideal of a complete bipartite  graph, the real projective plane and a few other examples of simplicial complexes. We have implemented the formula for the Hilbert-Kunz function in an algorithm written in the language of Macaulay2.\\

\textbf{Acknowledgement:} We thank the  anonymous referee for his valuable suggestions.

\section{Preliminaries}

In this section we gather some results which we shall use in the later sections. 

Let $R$ be a ring and $I$ be an $R$-ideal.  Let $G(I)=\oplus_{n\geq 0} {I^n}/{I^{n+1}}$ be the associated graded ring of $I$. An ideal $J \subseteq I$ is called a reduction of $I$ if $JI^n = I^{n+1}$, for all large $n.$ A minimal reduction of $I$ is a reduction of $I$ minimal with respect to inclusion. For a minimal reduction $J$ of $I$, we set $$r_J(I) = \min \{ n \mid  I^{m+1} = JI^m \text{ for all } m \geq n \}.$$ 
The reduction number of $I$ is defined as 
\[ r(I) = \min\{ r_J(I) \mid J \text{ is a minimal reduction of } I \}. \]

Let $(R,\mm)$ be a $d$-dimensional local ring and $I$ be an $\mm$-primary ideal. It is known that $H_I(n) := \ell(R/I^n)$ is a polynomial function of $n$ of degree $d$, for large $n$. In particular, there
exists a polynomial $P_I(x) \in \mathbb{Q}[x]$ such that $H_I(n)=P_I(n)$ for all large $n$. The postulation number of $I$ is defined as 
\[ n(I) = \max\{ n \mid H_I(n) \neq P_I (n) \}. \]

Let $M$ be a finitely generated $R$-module. We define $a_i(M)$ to be $\max\{u \in \mathbb{Z} \mid [H_{\mm}^{i}(M)]_u \neq 0\}$ if $H_{\mm}^{i}(M) \neq 0$, and  $-\infty$ otherwise. We shall use the following results to estimate the reduction number of powers of an ideal. 

\begin{Theorem}[{\cite[Corollary 2.21]{marleyThesis}}] \label{Marley}
	Let $(R, \mm)$ be a $d$-dimensional Cohen-Macaulay local ring with infinite residue field and $I$ be an $\mm$-primary ideal such that $\grade(G(I)_+) \geq d - 1.$ Then for $k \geq 1$, 
	\[ r(I^k) = \left\lfloor \frac{n(I)}{k} \right\rfloor + d. \]
\end{Theorem}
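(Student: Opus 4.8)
The plan is to bootstrap from the case $k=1$. Granting two statements — (i) that $r(J)=n(J)+d$ holds for every $\mm$-primary ideal $J$ in a $d$-dimensional Cohen--Macaulay local ring with infinite residue field satisfying $\grade(G(J)_+)\geq d-1$, and (ii) that for all $k\geq 1$ one has $\grade(G(I^k)_+)\geq d-1$ together with $n(I^k)=\lfloor n(I)/k\rfloor$ — the theorem is immediate: apply (i) to the ideal $J=I^k$ to get $r(I^k)=n(I^k)+d=\lfloor n(I)/k\rfloor+d$. So the task is to establish (i), the formula for $n(I^k)$, and the inheritance of the grade condition.

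For (i), I would use the infinite residue field to choose general elements $a_1,\dots,a_{d-1}\in J$ whose initial forms in $[G(J)]_1$ form a regular sequence on $G(J)$ (possible because $\grade(G(J)_+)\geq d-1$), and complete them to a minimal reduction of $J$. Set $R':=R/(a_1,\dots,a_{d-1})$, a one-dimensional Cohen--Macaulay local ring with infinite residue field, and $I':=JR'$. The Valabrega--Valla equalities $(a_1,\dots,a_{d-1})\cap J^n=(a_1,\dots,a_{d-1})J^{n-1}$ (equivalent to the regular-sequence condition) show that modding out preserves the reduction number, $r(J)=r(I')$ — here one also uses that under the grade hypothesis $r_{J_0}(J)$ does not depend on the minimal reduction $J_0$ — while iterating $H_{J/(a_1)}(n)=H_J(n)-H_J(n-1)$ gives $n(I')=n(J)+(d-1)$ (the postulation number rises by one at each step). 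In dimension one a minimal reduction is principal, say $(a)$ with $a$ a nonzerodivisor; since $(a)$ is a reduction, $\ell(I'^{\,m}/aI'^{\,m})=e((a))=e(I')$ for every $m$, so $r_{(a)}(I')=\min\{m:\ell(I'^{\,m}/I'^{\,m+1})=e(I')\}$, an expression manifestly independent of $(a)$ and hence equal to $r(I')$. As $\ell(I'^{\,m}/I'^{\,m+1})=P_{I'}(m+1)-P_{I'}(m)=e(I')$ for $m>n(I')$, while equality at $m=n(I')$ would force $H_{I'}(n(I'))=P_{I'}(n(I'))$, one gets $r(I')=n(I')+1$; combining, $r(J)=n(J)+d$.

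For the formula $n(I^k)=\lfloor n(I)/k\rfloor$, note $H_{I^k}(n)=\ell(R/I^{kn})=H_I(kn)$, hence $P_{I^k}(n)=P_I(kn)$ and $n(I^k)=\max\{n:H_I(kn)\neq P_I(kn)\}$; since $H_I=P_I$ beyond $n(I)$ this is $\leq\lfloor n(I)/k\rfloor$. For the reverse inequality it suffices to prove $H_I(m)\neq P_I(m)$ for \emph{every} $m\leq n(I)$ and then take $m=k\lfloor n(I)/k\rfloor$. Writing $f=H_I-P_I$, reducing modulo $d-1$ general elements with regular initial forms on $G(I)$ identifies $\Delta^{d-1}f$ with $H_{I'}-P_{I'}$ for the one-dimensional reduction $I'$; and there $\Delta\bigl(P_{I'}-H_{I'}\bigr)(m)=e(I')-\ell(I'^{\,m-1}/I'^{\,m})\geq 0$, so $P_{I'}-H_{I'}$ is non-decreasing and eventually $0$, hence $\leq 0$, i.e.\ $\Delta^{d-1}f\geq 0$. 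A downward induction on $j$ from $d-1$ to $0$ — using that $\Delta^j f$ vanishes for $m>n(I)+j$, that a monotone eventually-zero function has a fixed sign, and that $\Delta^j f(n(I)+j)=(-1)^j f(n(I))\neq 0$ — then yields $(-1)^{d-1-j}\Delta^j f>0$ throughout $m\leq n(I)+j$; the case $j=0$ gives $(-1)^{d-1}\bigl(H_I(m)-P_I(m)\bigr)>0$ for all $m\leq n(I)$, as wanted.

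The step I expect to be the main obstacle is the inheritance $\grade(G(I^k)_+)\geq d-1$ for all $k$. For $d\leq 2$ it is clean — $\grade(G(I)_+)\geq 1$ is equivalent to every power of $I$ being Ratliff--Rush closed, which is inherited by the powers of $I^k$ — but in general one must appeal to the stability theory for depths of associated graded rings of powers of an ideal. (One could also sidestep it: a hypothesis-free inequality $r(J)\geq n(J)+d$ for an arbitrary $\mm$-primary ideal in a Cohen--Macaulay local ring, combined with $n(I^k)=\lfloor n(I)/k\rfloor$ and the upper bound $r(I^k)\leq\lfloor n(I)/k\rfloor+d$ coming from (i), would already force equality.)
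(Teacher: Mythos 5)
The paper offers no proof of this statement---it is quoted verbatim from Marley's thesis---so there is nothing internal to compare you against; I am judging your argument on its own. Your reduction of the theorem to three ingredients, namely (i) $r(J)=n(J)+d$ under the grade hypothesis, (ii) $n(I^k)=\lfloor n(I)/k\rfloor$, and (iii) $\grade(G(I^k)_+)\geq d-1$ for all $k$, is the right skeleton, and your treatments of (i) and (ii) are essentially sound: the descent to dimension one by elements whose initial forms are $G(J)$-regular, the identity $r(I')=n(I')+1$ in dimension one via $\ell(I'^{\,m}/aI'^{\,m})=e(I')$, and the sign analysis showing $(-1)^{d-1}\bigl(H_I(m)-P_I(m)\bigr)>0$ for $m\leq n(I)$ (which is what rules out $H_I=P_I$ at $m=k\lfloor n(I)/k\rfloor$) all go through, up to routine care with conventions when $n(I)\leq 0$.

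The genuine gap is exactly the one you flag and then fail to close: ingredient (iii). Neither fallback you offer repairs it. The ``stability theory'' for $\depth G(I^k)$ controls only large $k$, while the theorem is asserted for every $k\geq 1$. The sidestep is circular: the upper bound $r(I^k)\leq\lfloor n(I)/k\rfloor+d$ is said to ``come from (i)'', but (i) applies to the ideal $I^k$ only if $\grade(G(I^k)_+)\geq d-1$, which is the very hypothesis you are trying to avoid; and the complementary ``hypothesis-free'' inequality $r(J)\geq n(J)+d$ is not a known fact for arbitrary $\mm$-primary ideals in Cohen--Macaulay rings---it is itself part of what the grade hypothesis buys. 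The gap is fillable, and not by stability theory: if $a_1^*,\dots,a_{d-1}^*$ is a $G(I)$-regular sequence of degree-one forms, then the degree-$k$ forms $(a_1^k)^*=(a_1^*)^k,\dots,(a_{d-1}^k)^*=(a_{d-1}^*)^k$ are again a $G(I)$-regular sequence, and the resulting Valabrega--Valla identities $(a_1^k,\dots,a_j^k)\cap I^{m}=(a_1^k,\dots,a_j^k)I^{m-k}$, read off at $m=kn$, are precisely the Valabrega--Valla conditions making $(a_1^k)^*,\dots,(a_{d-1}^k)^*$ a regular sequence on $G(I^k)$, whence $\grade(G(I^k)_+)\geq d-1$. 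With that lemma supplied, your bootstrap does prove the theorem.
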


\begin{Theorem}[{\cite[Theorem 2.1]{hoa}}] \label{Hoa}
	Let $(R,\mm)$ be a Noetherian local ring and let $I \subseteq \mm$ be an $R$-ideal. Then $r_J(I^n)$ is independent of $J$ and stable if $n$ is large. In particular, for all $n > \max\{ |a_i(G(I))| \colon a_i(G(I)) \neq - \infty \}$, we get
	\begin{align*}
	r_J(I^n) = 
	\begin{cases}
	s & \text{ if } a_s(G(I)) \geq 0, \\
	s-1 & \text{ if } a_s(G(I)) <0,
	\end{cases}
	\end{align*}
	where $J$ is any minimal reduction of $I^n$ and $s$ is the analytic spread of $I.$
\end{Theorem}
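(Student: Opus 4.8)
The statement has two parts, the \emph{stability} (for $n\gg 0$ the integer $r_J(I^n)$ is the same for every minimal reduction $J$) and the \emph{evaluation} (that common value is $s$ or $s-1$ according to the sign of $a_s(G(I))$, $s=\ell(I)$), and I would attack them together through the associated graded ring. \emph{Step 1: reduction and the dictionary with $G$.} Replacing $R$ by $R[X]_{\mm R[X]}$ is faithfully flat, leaves $\ell$, the reduction numbers, and the graded local cohomology of $G(I)$ unchanged, so assume $R/\mm$ infinite; then every minimal reduction of $I^n$ is generated by exactly $s=\ell(I^n)=\ell(I)$ elements. Fix a minimal reduction $J=(b_1,\dots,b_s)$ of $I^n$, write $b_i^*\in[G(I^n)]_1=I^n/I^{2n}$ for the initial forms, and set $C_J=G(I^n)/(b_1^*,\dots,b_s^*)G(I^n)$. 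A graded Nakayama argument gives $I^{n(m+1)}=JI^{nm}\iff[C_J]_m=0$, hence $r_J(I^n)=\max\{m:[C_J]_m\neq0\}$; and because the residues of the $b_i^*$ form a homogeneous system of parameters of the fiber cone $G(I^n)/\mm G(I^n)$, the module $C_J$ is killed by a power of $G(I^n)_+$, so that maximum is the top degree of $H^0_{G(I^n)_+}(C_J)$.

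\emph{Step 2: the regularity-type formula for a general $J$.} Since $R/\mm$ is infinite one may choose $J$ so that $b_1^*,\dots,b_s^*$ is a filter-regular sequence on $G(I^n)$ with respect to its maximal homogeneous ideal. Peeling the $b_i^*$ off one at a time via the four-term exact sequences $0\to(0:_Mb_i^*)\to M\xrightarrow{b_i^*}M(1)\to(M/b_i^*M)(1)\to0$ (first term of finite length) and chasing the induced long exact sequences in local cohomology, one obtains
\[
r_J(I^n)\;=\;\max_{0\le i\le s}\bigl\{\,a_i(G(I^n))+i\,\bigr\},
\]
the mechanism being that each of the $s$ peeling steps carries $H^j$ of the current module into $H^{j-1}$ of the next one with the top graded degree raised by $1$, while the cohomologies $H^j_{G(I^n)_+}(G(I^n))$ with $j>s$ never descend to cohomological level $0$. (Here $a_i(G(I^n))$ is the top degree of $H^i$ with respect to the maximal homogeneous ideal, $-\infty$ when that module vanishes; for $\mm$-primary $I$ this coincides with the $G(I^n)_+$-version.)

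\emph{Step 3: stability and evaluation for large $n$.} Two inputs finish the proof. First, for $n\gg0$ the hypothesis of Step~2 is automatic: \emph{every} minimal reduction $J$ of $I^n$ has $b_1^*,\dots,b_s^*$ filter-regular on $G(I^n)$. The obstruction to filter-regularity (superficiality) is controlled by the degrees occupied by a fixed finitely generated graded local cohomology module of the Rees algebra $\R(I)$; these degrees grow at most linearly, so upon passing to the $n$-th Veronese $\R(I^n)=\R(I)^{(n)}$ they become uniformly bounded and, for $n$ beyond that bound, cannot obstruct filter-regularity no matter which minimal reduction is chosen — this yields the independence of $J$ and the stability. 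Second, one shows that for $n\gg0$ we have $a_i(G(I^n))=0$ when $a_i(G(I))\ge0$, $a_i(G(I^n))=-1$ when $-\infty<a_i(G(I))<0$, and $a_i(G(I^n))=-\infty$ when $a_i(G(I))=-\infty$; since moreover $a_s(G(I))\neq-\infty$ for $s=\ell(I)$ (the $\ell$-th local cohomology of $G(I)$ surjects onto the nonvanishing top local cohomology of the fiber cone), the formula of Step~2 collapses: for $i<s$ one has $a_i(G(I^n))+i\le i\le s-1$, while the $i=s$ term equals $s$ if $a_s(G(I))\ge0$ and $s-1$ if $a_s(G(I))<0$, which is precisely the asserted value.

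\emph{Main obstacle.} The real work sits in Step~3. Unlike the Rees algebra, $G(I^n)$ is \emph{not} a Veronese subalgebra of $G(I)$ — there is only a surjection $G(I^n)\twoheadrightarrow G(I)^{(n)}$ — so the comparison of $a_i(G(I^n))$ with $a_i(G(I))$ must be obtained indirectly, routing through $\R(I^n)=\R(I)^{(n)}$ and the standard exact sequences linking $\R(I)$, $G(I)$ and $R$ (together with their twists $(\pm1)$), and keeping careful track of the individual graded strands. Proving that these estimates are uniform in $n$, and simultaneously that filter-regularity of the initial forms of an \emph{arbitrary} minimal reduction of $I^n$ holds for all large $n$, is the heart of the argument; the Nakayama and peeling steps are routine by comparison.
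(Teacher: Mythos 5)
The paper does not prove this statement at all --- it is imported verbatim from Hoa's paper (\cite[Theorem 2.1]{hoa}) and used as a black box --- so there is no internal argument to compare yours against; what follows is an assessment of your sketch on its own terms. Your overall architecture (the Nakayama dictionary $r_J(I^n)=\max\{m : [G(I^n)/(b_1^*,\dots,b_s^*)]_m\neq 0\}$, peeling off a filter-regular sequence to reach the $a$-invariants of $G(I^n)$, and an asymptotic comparison of $a_i(G(I^n))$ with $a_i(G(I))$ routed through $\R(I^n)=\R(I)^{(n)}$ because $G(I^n)$ is not a Veronese of $G(I)$) is indeed the correct shape of the Trung--Hoa argument. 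But there is one genuine error and two genuine gaps.

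The error is the displayed equality $r_J(I^n)=\max_{0\le i\le s}\{a_i(G(I^n))+i\}$ in Step 2: the four-term sequences give $a_0(M/bM)\le\max\{a_0(M),\,a_1(M)+1\}$ in one direction and $a_i(M/bM)\ge a_{i+1}(M)+1$ in the other, so $s$ peeling steps yield only the sandwich $a_s(G(I^n))+s\;\le\;r_J(I^n)\;\le\;\max_i\{a_i(G(I^n))+i\}$, and these bounds do not coincide in general. Your Step 3 is salvageable because once every $a_i(G(I^n))\le 0$ the two sides agree, but you never state the lower bound, and without it you prove only $r_J(I^n)\le s$ (resp.\ $\le s-1$), not equality. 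The gaps: (a) the claim that for $n\gg 0$ \emph{every} minimal reduction of $I^n$ has filter-regular initial forms is supported only by a heuristic about linearly growing degrees; a minimal reduction is constrained only to induce a system of parameters on the fiber cone, which does not prevent a generator's initial form from lying in an associated prime of $G(I^n)$ not containing $G(I^n)_+$, so this assertion is doubtful, and the independence of $J$ should instead be extracted from two-sided bounds valid for \emph{arbitrary} minimal reductions. (b) The asymptotic dichotomy $a_i(G(I^n))\in\{0,-1,-\infty\}$ governed by the sign of $a_i(G(I))$, together with $a_s(G(I))\neq-\infty$ and, crucially, the \emph{effective} threshold $n>\max\{|a_i(G(I))| : a_i(G(I))\neq-\infty\}$ that the theorem asserts and that the paper actually uses (its $\delta$), is precisely the content of Hoa's theorem --- and you explicitly defer it. As written, the proposal is an accurate map of where the proof must go rather than a proof.
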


Let $S$ be a $d$-dimensional Cohen-Macaulay local ring and let $I$ be a parameter ideal. Fix $s \in \mathbb{N}.$ For a fixed set of generators of $I$, define functions 
\[F(n) := H_I(I^{[s]},n) = \ell_S\left( \dfrac{I^{[s]}}{I^{[s]}I^n} \right) \text{ \ and \ } H(n) := H_I(S,n) = \ell_S\left( \dfrac{S}{I^n} \right) = e(I) \binom{n+d-1}{d} \] 
for all $n.$ Note that if $S$ is $1$-dimensional, then $F(n)=H(n)$ for all $n$ and for all $s$. In \cite{gkv}, the authors prove that the function $F(n)$ is a piecewise polynomial in $n.$

\begin{Theorem} [{\cite[Theorem 3.2]{gkv}}]  \label{F(n)}
	Let $S$ be a $d$-dimensional Cohen-Macaulay local ring and $I$ be a parameter ideal. Let $d \geq 2.$ For a fixed $s \in \mathbb{N}$,
	\begin{align*}
	F(n) = 
	\begin{cases}
	d \ H(n) &  \text{if } 1 \leq n \leq s, \\
	\sum_{i=1}^{d-1} (-1)^{i+1} \binom{d}{i} H(n-(i-1)s) & \text{ if }  s+1 \leq n \leq (d-1)s-1, \\
	H(n+s) - s^d e(I) & \text{ if } n \geq (d-1)s.
	\end{cases}
	\end{align*}
\end{Theorem}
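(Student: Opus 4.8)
The goal is to compute $F(n) = \ell_S(I^{[s]}/I^{[s]}I^n)$ for a parameter ideal $I = (x_1, \ldots, x_d)$ in a $d$-dimensional Cohen-Macaulay local ring. The natural starting point is the short exact sequence
\[ 0 \longrightarrow \frac{I^{[s]}}{I^{[s]}I^n} \longrightarrow \frac{S}{I^{[s]}I^n} \longrightarrow \frac{S}{I^{[s]}} \longrightarrow 0, \]
which gives $F(n) = \ell(S/I^{[s]}I^n) - \ell(S/I^{[s]}) = \ell(S/I^{[s]}I^n) - s^d e(I)$, using that $I^{[s]} = (x_1^s, \ldots, x_d^s)$ is again a parameter ideal with $e(I^{[s]}) = s^d e(I)$ in the Cohen-Macaulay case. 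So everything reduces to computing $\ell(S/I^{[s]}I^n)$, and the three cases in the statement will come from understanding how the ideal $I^{[s]}I^n$ sits inside the powers $I^m$ for various $m$.

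\textbf{Key steps.} First I would record the sandwich $I^{n+ds} \subseteq I^{[s]}I^n \subseteq I^{n+s}$ (the left containment since $I^{ds} \subseteq (x_1^s,\ldots,x_d^s) = I^{[s]}$ because any monomial of degree $ds$ in the $x_i$ must have some exponent $\geq s$; the right containment is clear). This immediately handles the \emph{stable range} $n \geq (d-1)s$: there one expects $I^{[s]}I^n = I^{n+s}$ exactly, which would give $\ell(S/I^{[s]}I^n) = H(n+s) = e(I)\binom{n+d-1}{d}$ and hence $F(n) = H(n+s) - s^d e(I)$. To prove the equality $I^{[s]}I^n = I^{n+s}$ for $n \geq (d-1)s$, I would argue monomially: since $I$ is generated by a regular sequence, $I^m/I^{m+1}$ is a free $S/I$-module on the monomials of degree $m$, so it suffices to check that every monomial $x^{\ab}$ with $|\ab| = n+s$ and $n \geq (d-1)s$ lies in $I^{[s]}I^n$ — but if all exponents $a_i \leq s-1$ then $|\ab| \leq d(s-1) < n+s$, a contradiction, so some $a_i \geq s$, and we can factor off $x_i^s$. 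The \emph{initial range} $1 \leq n \leq s$ should follow from $F(n) = \ell(S/I^{[s]}I^n) - s^d e(I)$ together with the observation that for small $n$ the colength $\ell(S/I^{[s]}I^n)$ can be computed directly from the associated graded ring of the regular sequence — here I expect $F(n) = d\, H(n) = d\, e(I)\binom{n+d-1}{d}$ to come out of an inclusion-exclusion count on monomials, or from the exact sequence relating $I^{[s]}I^n$ to $\bigcap_i (x_i^s, I^n\cdot(\text{rest}))$.

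\textbf{The middle range and the inclusion-exclusion.} The heart of the argument is the intermediate range $s+1 \leq n \leq (d-1)s - 1$, where $I^{[s]}I^n$ is strictly between consecutive powers and the answer is the alternating sum $\sum_{i=1}^{d-1}(-1)^{i+1}\binom{d}{i} H(n-(i-1)s)$. The cleanest route I see is to write $I^{[s]}I^n = \sum_{j=1}^d x_j^s I^n$ and apply the inclusion-exclusion (Mayer–Vietoris-type) formula for lengths of sums of ideals: $\ell(S/\sum_j J_j)$ can be expanded via the colengths of the various intersections $\bigcap_{j \in T} J_j$. In the regular-sequence setting these intersections are computable — $\bigcap_{j \in T}(x_j^s I^n) = (\prod_{j\in T} x_j^s) I^n \cap \ldots$ and one uses flatness of $I^m/I^{m+1}$ over $S/I$ together with the fact that monomials in a regular sequence behave like monomials in a polynomial ring. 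Carrying out this bookkeeping, the term indexed by a subset of size $i$ contributes a power $I^{n - (i-1)s}$ (roughly: intersecting $i$ of the $x_j^s I^n$ forces degree $\geq n + (i-1)s$ of "overlap," which after the cancellation leaves $H(n-(i-1)s)$), and the range restriction $s+1 \le n \le (d-1)s-1$ is exactly what guarantees that the relevant powers $I^{n-(i-1)s}$ are nonzero and that the top and bottom cases degenerate correctly.

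\textbf{Main obstacle.} I expect the main difficulty to be the precise combinatorial justification in the middle range: showing that the naive inclusion-exclusion over the ideals $x_j^s I^n$ does not overcount, i.e., that the lattice of ideals generated by the $x_j^s I^n$ under sum and intersection behaves, for length purposes, exactly like the corresponding lattice of monomial ideals in a polynomial ring. This is where the Cohen-Macaulay (regular sequence) hypothesis is essential, via the freeness of $\operatorname{gr}_I(S)$ over $S/I$, and some care is needed at the boundary values $n = s+1$ and $n = (d-1)s-1$ to confirm the formula matches the initial and stable ranges continuously. The assumption $d \geq 2$ is needed because for $d = 1$ one has $I^{[s]}I^n = x_1^{s}I^n = I^{n+s}$ trivially and $F(n) = H(n)$, as already noted in the text.
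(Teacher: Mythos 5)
This theorem is imported into the paper from \cite[Theorem 3.2]{gkv} and is not proved here, so there is no internal proof to compare against; I can only assess your plan on its own terms. Your overall framework is the right one: the reduction $F(n)=\ell(S/I^{[s]}I^n)-s^de(I)$ is correct (with $\ell(S/I^{[s]})=e(I^{[s]})=s^de(I)$ since $I^{[s]}$ is again a parameter ideal in a Cohen--Macaulay ring), the stable range follows from $I^{[s]}I^n=I^{n+s}$ for $n\geq(d-1)s$ exactly as you say (pigeonhole on exponents in $\mathrm{gr}_I(S)\cong(S/I)[X_1,\dots,X_d]$, then lift by Artin--Rees/Krull intersection), and the initial range $n\leq s$ is cleanest via the Koszul presentation $S^{\binom{d}{2}}\to S^d\to I^{[s]}\to 0$: the relation matrix has entries $\pm x_j^s\in I^s\subseteq I^n$, so $I^{[s]}\otimes S/I^n\cong(S/I^n)^d$ is free and $F(n)=dH(n)$. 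You gesture at this but do not pin it down.

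The genuine gap is in the middle range. The Mayer--Vietoris expansion you propose, $\ell(S/\sum_j x_j^sI^n)=\sum_{\emptyset\neq T}(-1)^{|T|+1}\ell\bigl(S/\bigcap_{j\in T}x_j^sI^n\bigr)$, is ill-posed: for $d\geq 2$ each ideal $x_j^sI^n$ is contained in the principal ideal $(x_j^s)$, which is not $\mm$-primary, so \emph{every} term on the right-hand side is infinite (already the two-ideal case reads $\infty+\infty-\infty$). Relatedly, your bookkeeping for the contribution of a size-$i$ subset is off: $\bigcap_{j\in T}x_j^sI^n=\bigl(\prod_{j\in T}x_j^s\bigr)I^{n-(i-1)s}$ sits in degree $\geq n+s$, and its colength is not $H(n-(i-1)s)$ (it is infinite). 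The repair is to run the inclusion--exclusion degree by degree rather than on colengths of ideals: since $\mathrm{gr}_I(S)$ is a polynomial ring over the Artinian ring $S/I$ of length $e(I)$, one gets $\ell(S/I^{[s]}I^n)=e(I)\cdot\#\{a\in\NN^d: X^a\notin(X_1^s,\dots,X_d^s)(X_1,\dots,X_d)^n\}$, and the complement splits as $\{|a|\leq n+s-1\}\sqcup\{|a|\geq n+s,\ a_j\leq s-1\ \forall j\}$. Inclusion--exclusion over the conditions $a_j\geq s$ inside each finite set of monomials of fixed degree gives
\[
F(n)=H(n+s)-e(I)\,\#\{a: |a|\leq n+s-1,\ a_j\leq s-1\ \forall j\}=\sum_{i=1}^{d}(-1)^{i+1}\binom{d}{i}H(n-(i-1)s),
\]
which collapses to the three stated cases once one notes $H(m)=\ell(S/I^m)=0$ for $m\leq 0$ and checks which terms survive in each range of $n$. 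So your strategy is recoverable, but the step as written would fail and needs to be replaced by this graded count.
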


Let $\Delta$ be a $(d-1)$-dimensional simplicial complex on the vertex set $[n]=\{1,2,\ldots,n\}.$ Let $k$ be a  field and $S=k[x_1, x_2,\ldots,x_n]$ be the polynomial ring over $k.$ For $F\subset [n],$ we put $x^F=\prod_{i\in F}x_i.$ The ideal of $\Delta$ is $I_{\Delta}=(x^F\mid F\notin \Delta)S.$
The face ring or the Stanley-Reisner ring of $\Delta$ is the ring $k[\Delta]:=S/I_{\Delta}.$  Let $\nn$ denote the unique maximal homogeneous ideal of $k[\Delta].$ The $f$-vector of $\Delta$ is $f(\Delta)=(f_{-1}, f_0,\ldots, f_{d-1}),$ where $f_{-1}=1$ and $f_i$ is the number of $i$-dimensional faces of $\Delta$, for $i=0,1,\ldots, d-1.$ The Hilbert series of $k[\Delta]$ is the formal power series $H(k[\Delta],t)=\sum_{i=0}^\infty \dim_k k[\Delta]_it^i$ where $k[\Delta]_i$ is the graded component of $k[\Delta]$ consisting of  homogeneous elements of degree $i$ in $k[\Delta].$ 

 Stanley showed the following:

\begin{Theorem}[{\cite[Theorem 1.4]{Stanley}}]\label{Stanley}
$H(k[\Delta],t)=\sum_{i=-1}^{d-1} \frac{f_i t^{i+1}}{(1-t)^{i+1}}$
\end{Theorem}

By taking the lcm of the denomenators we write the Hilbert series of $k[\Delta]$ as the rational function $H(k[\Delta],t)=(h_0+h_1t+\cdots+h_dt^d)/(1-t)^d.$ The vector $(h_0,h_1,\ldots, h_d)$ is called the $h$-vector of $\Delta$. Let $h^{(i)}(t)$ denote the $i^{th}$ derivative of $h(t) = h_0+h_1t+\cdots+h_dt^d.$
\begin{Theorem}[{\cite[Theorem 6.2]{gmv}}] \label{faceringpoly} 
	The Hilbert-Samuel function $\ell(k[\Delta]/\nn^s),$ for all $s \geq 1,$ is given by
 	\[ \ell\left( \frac{k[\Delta]}{\nn^s} \right) = \sum_{i=0}^d (-1)^i\frac{h^{(i)}(1)}{i!}\binom{s-1+d-i}{d-i}. \]
\end{Theorem}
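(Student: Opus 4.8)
The plan is to compute the generating series $\sum_{s\ge 0}\ell(k[\Delta]/\nn^s)\,t^s$ in closed form and then read off the coefficient of $t^s$. Since $k[\Delta]$ is a standard graded $k$-algebra, its maximal homogeneous ideal satisfies $\nn^s=\bigoplus_{i\ge s}k[\Delta]_i$, so $k[\Delta]/\nn^s\cong\bigoplus_{i=0}^{s-1}k[\Delta]_i$ as graded vector spaces and
\[
\ell\!\left(\frac{k[\Delta]}{\nn^s}\right)=\sum_{i=0}^{s-1}\dim_k k[\Delta]_i .
\]
Multiplying the Hilbert series $H(k[\Delta],t)=\sum_{i\ge 0}(\dim_k k[\Delta]_i)\,t^i$ by $t/(1-t)$ turns the coefficient sequence into its shifted sequence of partial sums, giving
\[
\sum_{s\ge 0}\ell\!\left(\frac{k[\Delta]}{\nn^s}\right)t^s=\frac{t}{1-t}\,H(k[\Delta],t)=\frac{t\,h(t)}{(1-t)^{d+1}},
\]
where the last equality uses Theorem~\ref{Stanley} in the form $H(k[\Delta],t)=h(t)/(1-t)^d$.

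The main step is a change of expansion point for the numerator. As $h$ is a polynomial of degree at most $d$, Taylor's formula at $t=1$ gives $h(t)=\sum_{i=0}^d\frac{h^{(i)}(1)}{i!}(t-1)^i=\sum_{i=0}^d(-1)^i\frac{h^{(i)}(1)}{i!}(1-t)^i$. Substituting this and cancelling the powers of $1-t$ yields
\[
\sum_{s\ge 0}\ell\!\left(\frac{k[\Delta]}{\nn^s}\right)t^s=\sum_{i=0}^d(-1)^i\frac{h^{(i)}(1)}{i!}\cdot\frac{t}{(1-t)^{d+1-i}} .
\]
Finally I would expand each summand by $\frac{1}{(1-t)^{d+1-i}}=\sum_{n\ge 0}\binom{n+d-i}{d-i}t^n$, so that the coefficient of $t^s$ in $\frac{t}{(1-t)^{d+1-i}}$ equals $\binom{s-1+d-i}{d-i}$ for every $s\ge 1$; comparing the coefficients of $t^s$ on the two sides gives exactly the asserted identity.

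The computation is essentially bookkeeping, so I do not expect a genuine obstacle; the only points needing care are that the identity $k[\Delta]/\nn^s=\bigoplus_{i<s}k[\Delta]_i$ relies on $k[\Delta]$ being generated in degree one, and that the extra factor of $t$ shifts indices, which is why the polynomial $\binom{s-1+d-i}{d-i}$ recovers the coefficient precisely in the range $s\ge 1$ claimed in the statement. One could alternatively derive the same formula from the Hilbert--Samuel polynomial together with the relation between its leading terms and the values $h^{(i)}(1)$, but the direct generating-function route above is cleaner and immediately pins down the range of validity.
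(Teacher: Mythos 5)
Your argument is correct. The paper itself does not prove this statement---it imports it from \cite[Theorem 6.2]{gmv}---so there is no internal proof to compare against, but your derivation is a complete and standard one: the identification $\nn^s=\bigoplus_{i\ge s}k[\Delta]_i$ (valid precisely because $k[\Delta]$ is standard graded, as you note), the partial-sum trick via multiplication by $t/(1-t)$, the Taylor expansion of $h$ at $t=1$, and the expansion $\tfrac{1}{(1-t)^{d+1-i}}=\sum_{n\ge 0}\binom{n+d-i}{d-i}t^n$ all fit together without gaps, and the formula checks against small examples (e.g.\ the $h$-vector $(1,2,-1)$ of the first example in Section 4 gives $s^2+s-1$, matching the Hilbert function $1,4,6,\dots$). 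No corrections needed.
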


The following result of Conca computes the generalized Hilbert-Kunz function of $k[\Delta]$.

\begin{Theorem}[{\cite[Remark 2.2]{conca}}] \label{conca}
	For $s \geq 1$, the generalized Hilbert-Kunz function of $k[\Delta]$ is given by the equation
	\[ \ell\left(\frac{k[\Delta]}{\nn^{[s]}}\right) = \sum_{i=0}^{d} f_{i-1}(s-1)^i. \]
\end{Theorem}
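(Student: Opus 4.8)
The plan is to turn this into an elementary count of monomials. Writing $k[\Delta]=S/I_\Delta$ with $S=k[x_1,\dots,x_n]$, the ideal $\nn$ is the image of $(x_1,\dots,x_n)$ and $\nn^{[s]}$ is the image of $(x_1^s,\dots,x_n^s)$, so $k[\Delta]/\nn^{[s]}\cong S/(I_\Delta+(x_1^s,\dots,x_n^s))$. Both $I_\Delta$ and $I_\Delta+(x_1^s,\dots,x_n^s)$ are monomial ideals of $S$, hence the quotient is spanned as a $k$-vector space by the images of the monomials of $S$ that do not lie in $I_\Delta+(x_1^s,\dots,x_n^s)$, and these images are $k$-linearly independent; so $\ell(k[\Delta]/\nn^{[s]})$ equals the number of such monomials.

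Next I would identify exactly which monomials survive. Since $I_\Delta$ is generated by the squarefree monomials $x^F$ with $F\notin\Delta$, and $\Delta$ is closed under passing to subsets, a monomial $x^a$ lies in $I_\Delta$ if and only if its support $\supp(x^a)$ contains a nonface, equivalently if and only if $\supp(x^a)\notin\Delta$. Adjoining the generators $x_1^s,\dots,x_n^s$ additionally discards every monomial divisible by some $x_i^s$. Thus a $k$-basis of $k[\Delta]/\nn^{[s]}$ is given by the monomials $x^a=x_1^{a_1}\cdots x_n^{a_n}$ with $\supp(x^a)\in\Delta$ and $0\le a_i\le s-1$ for all $i$.

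Finally I would count these monomials by stratifying according to their support. For a fixed face $F\in\Delta$ with $|F|=i$, a monomial $x^a$ with $\supp(x^a)=F$ and all exponents at most $s-1$ amounts to a choice of an exponent in $\{1,2,\dots,s-1\}$ at each vertex of $F$ and exponent $0$ elsewhere, giving exactly $(s-1)^i$ monomials (for $F=\emptyset$ this reads $(s-1)^0=1$, accounting for the constants). Since $\Delta$ has $f_{i-1}$ faces of cardinality $i$, summing over all faces yields
\[ \ell\!\left(\frac{k[\Delta]}{\nn^{[s]}}\right)=\sum_{F\in\Delta}(s-1)^{|F|}=\sum_{i=0}^{d}f_{i-1}(s-1)^{i}, \]
which is the claimed formula. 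There is no real obstacle here; the only points needing care are the standard facts that a quotient of $S$ by a monomial ideal has the surviving monomials as a $k$-basis and that membership of a monomial in $I_\Delta$ is detected by its support. One can sanity-check the conclusion on small complexes, e.g. a single point (where it gives $s$) or a set of $m$ isolated points (where it gives $1+m(s-1)$).
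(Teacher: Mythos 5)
Your argument is correct and complete: the identification of a monomial $k$-basis of $k[\Delta]/\nn^{[s]}$ with the monomials whose support is a face and whose exponents are at most $s-1$, followed by stratification over faces, gives exactly $\sum_{F\in\Delta}(s-1)^{|F|}=\sum_{i=0}^{d}f_{i-1}(s-1)^{i}$. The paper does not prove this statement itself but only cites Conca's Remark~2.2, and the counting argument there is essentially the one you give, so your proposal matches the intended proof.
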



\section{The generalized Hilbert-Kunz function of  $(\nn, \nn t)$ }

Let $S = k[x_1,\ldots,x_r]$ be a polynomial ring in $r$ variables over a field $k$ and let $\mm = (x_1,\ldots,x_r)$ denote the maximal homogeneous ideal of $S.$ Let $P_j$, for $j=1,\ldots,\alpha$ and $\alpha \geq 2$, be distinct $S$-ideals generated by subsets of $\{ x_1,\ldots,x_r \}.$ Let $I = \cap_{j=1}^{\alpha} P_j$ and $R=S/I.$ Let $\nn = \mm/I$ denote the maximal homogeneous ideal of $R.$ 

In this section, we show that the generalized Hilbert-Kunz function of the maximal homogeneous ideal $(\nn,\nn t)$ of the Rees algebra $\R(\nn)$ of $R$ is a polynomial for large $s.$ We begin by proving that for $s,n \in \NN$, $\ell_S(S/I+\mm^{[s]}\mm^n)$ is a piecewise polynomial in $s$ and $n.$ First we prove the following result which is a consequence of Theorem \ref{F(n)}.

\begin{Corollary} \label{corF(n)}
	Let $S=k[x_1,\ldots,x_d]$ be a polynomial ring in $d$ variables over a field $k.$ Let $\mm = (x_1,\ldots,x_d)$ be its maximal homogeneous ideal. Let $s,n \in \NN.$ \\
	{\rm(1)} If $d=1$, then $\ell \left(\frac{S}{\mm^{[s]}\mm^n}\right) = s+n$ for all $s,n\geq 0.$ \\
	{\rm(2)} If $d=2$, then
	\begin{align*}
	\ell \left(\frac{S}{\mm^{[s]}\mm^n}\right) = 
	\begin{cases}
	s^2 + n^2+n &  \text{if } 1 \leq n \leq s, \\[1mm]
	\binom{n+s+1}{2} & \text{ if } n \geq s.
	\end{cases}
	\end{align*}
	{\rm(3)} If $d \geq 3$, then
	\begin{align*}
	\ell \left(\frac{S}{\mm^{[s]}\mm^n}\right) = 
	\begin{cases}
	s^d + d \binom{n+d-1}{d} &  \text{if } 1 \leq n \leq s, \\[4mm]
	s^d + \sum_{i=1}^{d-1} (-1)^{i+1} \binom{d}{i} \binom{n-(i-1)s+d-1}{d}  & \text{ if }  s+1 \leq n \leq (d-1)s-1, \\[4mm]
	\binom{n+s+d-1}{d} & \text{ if } n \geq (d-1)s.
	\end{cases}
	\end{align*}
\end{Corollary}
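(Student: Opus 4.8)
The plan is to reduce everything to Theorem~\ref{F(n)}. Observe that $\mm^{[s]}\mm^n = I^{[s]}\cdot I^n$ in the notation of Theorem~\ref{F(n)}, taking $S = k[x_1,\ldots,x_d]$ (Cohen--Macaulay, in fact regular, of dimension $d$) and $I = \mm = (x_1,\ldots,x_d)$, a parameter ideal with $e(I) = 1$. The short exact sequence
\[
0 \lra \frac{\mm^{[s]}}{\mm^{[s]}\mm^n} \lra \frac{S}{\mm^{[s]}\mm^n} \lra \frac{S}{\mm^{[s]}} \lra 0
\]
gives $\ell(S/\mm^{[s]}\mm^n) = \ell(S/\mm^{[s]}) + F(n)$, where $F(n) = \ell(\mm^{[s]}/\mm^{[s]}\mm^n)$ is exactly the function appearing in Theorem~\ref{F(n)}. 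Since $\ell(S/\mm^{[s]}) = s^d$ (a standard computation, the colength of the $s$th Frobenius power of the maximal ideal in a polynomial ring), parts (2) and (3) follow immediately by substituting the three branches of Theorem~\ref{F(n)} together with the identity $H(n) = e(I)\binom{n+d-1}{d} = \binom{n+d-1}{d}$, and then translating the index shift in the last branch: $H(n+s) - s^d e(I) = \binom{n+s+d-1}{d} - s^d$, which cancels the added $s^d$ to yield $\binom{n+s+d-1}{d}$.

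First I would dispatch the case $d=1$ separately, since Theorem~\ref{F(n)} requires $d\geq 2$: here $\mm^{[s]}\mm^n = (x_1^s)(x_1^n) = (x_1^{s+n})$, so $\ell(S/(x_1^{s+n})) = s+n$, valid for all $s,n\geq 0$. Then for $d=2$ I would apply Theorem~\ref{F(n)} with $d=2$: the middle branch $s+1 \leq n \leq (d-1)s-1 = s-1$ is empty, so only two branches survive. For $1\leq n\leq s$ we get $F(n) = 2H(n) = 2\binom{n+1}{2} = n^2+n$, hence $\ell(S/\mm^{[s]}\mm^n) = s^2 + n^2 + n$; for $n\geq s$ we get $F(n) = H(n+s) - s^2 = \binom{n+s+1}{2} - s^2$, hence $\ell(S/\mm^{[s]}\mm^n) = \binom{n+s+1}{2}$. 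The overlap at $n=s$ is consistent since $s^2 + s^2 + s = \binom{2s+1}{2}$. For $d\geq 3$ all three branches of Theorem~\ref{F(n)} are genuinely present, and the substitution is as described above, with $H(n) = \binom{n+d-1}{d}$ and $H(n-(i-1)s) = \binom{n-(i-1)s+d-1}{d}$.

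There is essentially no obstacle here: the only points requiring a word of care are (i) confirming that Theorem~\ref{F(n)} applies to $S = k[x_1,\ldots,x_d]$ with $I = \mm$ (it does, as $S$ is a $d$-dimensional Cohen--Macaulay local ring after localizing or working graded-locally, and $\mm$ is a parameter ideal), (ii) noting that the residue field can be assumed infinite or that this hypothesis is not needed for Theorem~\ref{F(n)}, and (iii) checking the boundary consistency of the piecewise formula at $n=s$ and $n=(d-1)s$, which is a routine binomial identity. The genuinely new content is simply the additive term $\ell(S/\mm^{[s]}) = s^d$ coming from the short exact sequence, which is why the statement is labeled a corollary of Theorem~\ref{F(n)}.
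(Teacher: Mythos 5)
Your proposal is correct and follows exactly the paper's own route: the $d=1$ case is handled directly as $\ell(k[x]/(x^{s+n}))=s+n$, and for $d\geq 2$ the identity $\ell(S/\mm^{[s]}\mm^n)=\ell(S/\mm^{[s]})+\ell(\mm^{[s]}/\mm^{[s]}\mm^n)=s^d+F(n)$ reduces everything to Theorem~\ref{F(n)}. The extra checks you include (emptiness of the middle branch when $d=2$, consistency at the breakpoints) are sound but not needed beyond what the paper records.
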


\begin{proof}
	Let $s,n \in \NN.$ If $d=1$, then $S=k[x]$ and $\mm=(x)$ implying that
	\[ \ell\left(\frac{S}{\mm^{[s]}\mm^n}\right) = \ell\left(\frac{k[x]}{(x^{s+n})}\right) = s+n. \]
	Let $d \geq 2.$ Since 
	\[ \ell\left(\frac{S}{\mm^{[s]}\mm^n}\right) = \ell\left(\frac{S}{\mm^{[s]}}\right) + \ell\left(\frac{\mm^{[s]}}{\mm^{[s]}\mm^n}\right) \]
	and $\ell(S/\mm^{[s]}) = s^d$, the result follows from Theorem \ref{F(n)}.
\end{proof}

Let $T = \oplus_{n \geq 0} \ T_n$ be a Noetherian graded ring, where $T_0$ is an Artinian ring. Let $M = \oplus_{n \geq 0} \ M_n$ be a finitely generated graded $T$-module. Then $\ell_{T_0}(M_n) < \infty.$ The Hilbert series $H(M,\lambda)$ of $M$ is defined by $H(M,\lambda) = \sum_{n \geq 0} \ell_{T_0}(M_n) \lambda^n.$ 

\begin{Theorem} \label{thm1:alpha2}
	Let $T$ be a standard graded Artinian ring and let $I_1, \ldots, I_\alpha$, for $\alpha \geq 2$, be  homogeneous $T$-ideals. Let $I = \cap_{i=1}^{\alpha} I_i.$  Then
	\[ H\left(\frac{T}{I}, \lambda \right) = \sum_{i=1}^{\alpha} H\left(\frac{T}{I_i}, \lambda \right) - \sum_{\substack{i,j=1\\i<j}}^{\alpha} H\left(\frac{T}{I_i+I_j}, \lambda \right)+ \cdots +(-1)^{\alpha-1} H\left(\frac{T}{\sum_{i=1}^{\alpha}I_i}, \lambda \right). \]
\end{Theorem}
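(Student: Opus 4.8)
The plan is to prove the identity degree by degree, reducing it to the classical inclusion–exclusion formula for lengths (equivalently, for vector-space dimensions over the residue structure of the Artinian base). Since Hilbert series are defined coefficientwise by $H(M,\lambda)=\sum_{n\ge 0}\ell_{T_0}(M_n)\lambda^n$, it suffices to show, for each fixed $n$, that
\[
\ell_{T_0}\!\left(\left(\frac{T}{I}\right)_n\right)=\sum_{i=1}^{\alpha}\ell_{T_0}\!\left(\left(\frac{T}{I_i}\right)_n\right)-\sum_{i<j}\ell_{T_0}\!\left(\left(\frac{T}{I_i+I_j}\right)_n\right)+\cdots+(-1)^{\alpha-1}\ell_{T_0}\!\left(\left(\frac{T}{\sum_{i=1}^{\alpha}I_i}\right)_n\right).
\]
So the whole theorem follows once this numerical identity is established, and the grading then plays no further role: I would fix $n$, restrict to the finite-length $T_0$-module $T_n$, and work entirely with lengths of the submodules $(I_{i_1}+\cdots+I_{i_k})_n\subseteq T_n$.

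The core step is an inclusion–exclusion statement for submodules of a finite-length module. Writing $V=T_n$ and $W_i=(I_i)_n\subseteq V$, one has $(\sum_{j\in J} I_j)_n=\sum_{j\in J}W_j$ and $(\bigcap_j I_j)_n=\bigcap_j W_j$ since intersections and finite sums of graded submodules are taken degreewise. Then $\ell(T/I)_n=\ell(V)-\ell(\sum_i W_i)$, and the claimed identity is equivalent to
\[
\ell\!\left(\bigcap_{i=1}^{\alpha}W_i\right)=\sum_{\emptyset\ne J\subseteq\{1,\ldots,\alpha\}}(-1)^{|J|+1}\,\ell\!\left(V\Big/\sum_{j\in J}W_j\right)+\text{(a correction that telescopes)},
\]
but it is cleaner to prove directly the submodule inclusion–exclusion principle
\[
\ell\!\left(\sum_{i=1}^{\alpha}W_i\right)=\sum_{\emptyset\ne J\subseteq\{1,\ldots,\alpha\}}(-1)^{|J|+1}\,\ell\!\left(\bigcap_{j\in J}W_j\right),
\]
which one proves by induction on $\alpha$ using the exact sequence
\[
0\to\Big(\sum_{i<\alpha}W_i\Big)\cap W_\alpha\to\Big(\sum_{i<\alpha}W_i\Big)\oplus W_\alpha\to\sum_{i\le\alpha}W_i\to 0,
\]
together with the distributivity needed to identify $(\sum_{i<\alpha}W_i)\cap W_\alpha=\sum_{i<\alpha}(W_i\cap W_\alpha)$ — and here is the one genuine subtlety. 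The lattice of submodules of a module is \emph{not} distributive in general, so $(\sum_{i<\alpha}W_i)\cap W_\alpha$ need not equal $\sum_{i<\alpha}(W_i\cap W_\alpha)$. In the intended application the $I_i$ are generated by subsets of the variables (monomial ideals), so the monomial basis of $T_n$ makes all the relevant submodules coordinate subspaces and distributivity holds on the nose. I would therefore either (i) state the theorem with the standing hypothesis that the $I_i$ are monomial ideals in a polynomial quotient, in which case every $(I_{i_1}+\cdots+I_{i_k})_n$ is the $k$-span of a sub-\emph{monomial} set and the inclusion–exclusion is literally set-theoretic inclusion–exclusion applied to the monomial bases; or (ii) replace the length identity by the cleaner statement that the \emph{alternating sum of the modules} $\bigoplus_J (-1)^{?}\,T/\!\sum_{j\in J}I_j$ has a Koszul-type resolution, but (i) is the honest route given the paper's setup.

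The main obstacle, then, is not the bookkeeping — the alternating-sum identity for cardinalities is completely standard — but making precise the distributivity that converts the abstract submodule lattice into a Boolean lattice. I expect to dispatch this by recording that $T=S/J$ with $S$ a polynomial ring and $J$ a monomial ideal, that each $I_i$ is extended from a monomial ideal of $S$, and that consequently $T_n$ has a $k$-basis of monomials with each $(I_{i_1}+\cdots+I_{i_k})_n$ spanned by the monomials lying in at least one of the $I_{i_\ell}$; inclusion–exclusion on that finite set of monomials then yields the Hilbert-series identity coefficient by coefficient, and summing against $\lambda^n$ gives the stated formula. The rest — expanding $\ell(T/I)_n=\ell(T_n)-\ell((\sum_i I_i\cdots)_n)$ and matching signs with the right-hand side — is routine.
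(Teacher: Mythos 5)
Your plan is correct in substance, and it puts its finger on a real issue. The paper proves the identity by induction on $\alpha$, using the exact sequence
\[
0 \longrightarrow T/(\cap_{i=1}^{\alpha}I_i) \longrightarrow T/(\cap_{i=1}^{\alpha-1}I_i)\oplus T/I_\alpha \longrightarrow T/\bigl(\cap_{i=1}^{\alpha-1}I_i + I_\alpha\bigr) \longrightarrow 0
\]
and then applying the induction hypothesis to the last term, which silently rewrites $\cap_{i=1}^{\alpha-1}I_i + I_\alpha$ as $\cap_{i=1}^{\alpha-1}(I_i+I_\alpha)$ --- exactly the distributivity you isolate. You are right that this fails for general homogeneous ideals, and in fact the theorem as stated is false for $\alpha\ge 3$: in $T=k[x,y]/(x,y)^2$ take $I_1=(x)$, $I_2=(y)$, $I_3=(x+y)$; then $\cap_i I_i=0$, so the left-hand side is $H(T,\lambda)=1+2\lambda$, while the right-hand side is $3(1+\lambda)-3\cdot 1+1=1+3\lambda$. (For $\alpha=2$ the statement does hold for arbitrary ideals, since only the exact sequence is needed.) So your proposed repair --- assume the $I_i$ are monomial ideals, so that the lattice they generate is distributive and everything reduces, degree by degree, to set-theoretic inclusion--exclusion on monomial bases --- is not pedantry but a necessary additional hypothesis; it is harmless for the paper because in the one place the theorem is applied the ideals are generated by subsets of the variables. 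Beyond this, your degreewise reduction together with the exact sequence for submodules of $T_n$ is essentially the paper's argument read one graded piece at a time, and your route (i) (literal inclusion--exclusion on the finite sets of monomials) is a clean, more elementary alternative to the induction; the only loose end in your write-up is that the form of inclusion--exclusion you actually need is the one computing $\ell(\cap_j W_j)$ from the lengths $\ell(\sum_{j\in J}W_j)$, which follows from the one you display by complementation inside $T_n$.
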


\begin{proof}
	 Apply induction on $\alpha.$ Let $\alpha=2.$ Consider the following short exact sequence
	\begin{align*}
	0 \longrightarrow \frac{T}{I_1 \cap I_2} \longrightarrow \frac{T}{I_1} \bigoplus \frac{T}{I_2} \longrightarrow \frac{T}{I_1+I_2} \longrightarrow 0. 
	\end{align*}
	Then
	\begin{align*}
	H\left(\frac{T}{I}, \lambda \right) = H\left(\frac{T}{I_1 \cap I_2}, \lambda \right) 
	&= H\left(\frac{T}{I_1}, \lambda \right) + H\left(\frac{T}{I_2}, \lambda \right) - H\left(\frac{T}{I_1+I_2}, \lambda \right). 
	\end{align*}
	Let $\alpha>2$ and consider the short exact sequence
	\begin{align*}
	0 \longrightarrow \frac{T}{\cap_{i=1}^{\alpha} I_i} \longrightarrow \frac{T}{\cap_{i=1}^{\alpha-1} I_i} \bigoplus \frac{T}{I_\alpha} \longrightarrow \frac{T}{\cap_{i=1}^{\alpha-1}I_i+I_\alpha} \longrightarrow 0. 
	\end{align*}
	Using induction hypothesis, it follows that
	\begin{align*}
	H\left(\frac{T}{\cap_{i=1}^{\alpha} I_i}, \lambda \right) 
	&= H\left(\frac{T}{\cap_{i=1}^{\alpha-1} I_i}, \lambda \right) + H\left(\frac{T}{I_\alpha}, \lambda \right) - H\left(\frac{T}{\cap_{i=1}^{\alpha-1}I_i+I_\alpha}, \lambda \right) \\
	&= \sum_{i=1}^{\alpha-1} H\left(\frac{T}{I_i}, \lambda \right) - \sum_{\substack{i,j=1\\i<j}}^{\alpha-1} H\left(\frac{T}{I_i+I_j}, \lambda \right) + \cdots +(-1)^{\alpha-2} H\left(\frac{T}{\sum_{i=1}^{\alpha-1}I_i}, \lambda \right) \\
	&+ H\left(\frac{T}{I_\alpha}, \lambda \right) - \sum_{i=1}^{\alpha-1} H\left(\frac{T}{I_i+I_\alpha}, \lambda \right) + \sum_{\substack{i,j=1\\i<j}}^{\alpha-1} H\left(\frac{T}{I_i+I_j+I_\alpha}, \lambda \right)+ \cdots  \\
	&\hspace{6.5cm}+(-1)^{\alpha-1} H\left(\frac{T}{\sum_{i=1}^{\alpha-1}I_i+I_\alpha}, \lambda \right). 
	\end{align*}
	Rearranging the terms gives the required result.
\end{proof}

\begin{Corollary} \label{cor1:alpha2}
	Let $S = k[x_1,\ldots,x_r]$ be a polynomial ring in $r$ variables over a field $k$ and let $\mm = (x_1,\ldots,x_r)$ be the maximal homogeneous ideal of $S.$ Let $P_1, \ldots, P_\alpha$, for $\alpha \geq 2$, be distinct $S$-ideals generated by subsets of $\{x_1,\ldots,x_r\}.$ Let $I = \cap_{i=1}^{\alpha} P_i.$ Then for $s,n \in \NN$,
	\begin{align} \label{cor1:eq1}
		\ell\left(\frac{S}{I+\mm^{[s]}\mm^n}\right) 
		&= \sum_{i=1}^{\alpha} \ell\left(\frac{S}{P_i+\mm^{[s]}\mm^n}\right) - \sum_{1 \leq i<j \leq \alpha} \ell\left(\frac{S}{P_i+P_j+\mm^{[s]}\mm^n}\right)+ \cdots \nonumber\\
		&\hspace{7cm}+(-1)^{\alpha-1} \ell\left(\frac{S}{\sum_{i=1}^{\alpha} P_i+\mm^{[s]}\mm^n}\right).
	\end{align}
	In particular, $\ell\left(\frac{S}{I+\mm^{[s]}\mm^n}\right)$ is a piecewise polynomial in $s$ and $n.$ 
\end{Corollary}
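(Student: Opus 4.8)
The plan is to derive the displayed formula \eqref{cor1:eq1} as a direct application of Theorem \ref{thm1:alpha2}, and then to read off piecewise polynomiality from Corollary \ref{corF(n)}. First I would fix $s,n \in \NN$ and pass to the Artinian quotient. The key observation is that $\mm^{[s]}\mm^n$ is a homogeneous $\mm$-primary ideal, so $S/\mm^{[s]}\mm^n$ is a standard graded Artinian ring; write $T = S/\mm^{[s]}\mm^n$. Each $P_i$ (and each sum $P_{i_1}+\cdots+P_{i_k}$) is generated by a subset of the variables, hence its image $\bar P_i$ in $T$ is a homogeneous $T$-ideal, and $T/\bar P_i \cong S/(P_i + \mm^{[s]}\mm^n)$. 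Crucially, $I = \cap_{i=1}^\alpha P_i$ implies $\bar I = \cap_{i=1}^\alpha \bar P_i$ in $T$: the inclusion $\bar I \subseteq \cap \bar P_i$ is clear, and for the reverse one uses that $\mm^{[s]}\mm^n \subseteq P_i$ fails in general, so one must argue slightly more carefully — actually the cleanest route is to note $(I + \mm^{[s]}\mm^n) = \cap_{i=1}^\alpha (P_i + \mm^{[s]}\mm^n)$ as $S$-ideals, which holds because $\mm^{[s]}\mm^n$ is a monomial ideal and $I$, all $P_i$ are monomial ideals, so one can check the equality on monomials: a monomial lies in $\cap_i (P_i + \mm^{[s]}\mm^n)$ iff it lies in $\mm^{[s]}\mm^n$ or in every $P_i$, i.e.\ iff it lies in $I + \mm^{[s]}\mm^n$. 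Then $\bar I = \cap_i \bar P_i$ in $T$ follows at once.

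With this identification, Theorem \ref{thm1:alpha2} applied to $T = S/\mm^{[s]}\mm^n$ and the ideals $\bar P_1,\ldots,\bar P_\alpha$ gives an identity of Hilbert series
\[
H\!\left(\frac{T}{\bar I},\lambda\right) = \sum_{i=1}^\alpha H\!\left(\frac{T}{\bar P_i},\lambda\right) - \sum_{i<j} H\!\left(\frac{T}{\bar P_i + \bar P_j},\lambda\right) + \cdots + (-1)^{\alpha-1} H\!\left(\frac{T}{\sum_i \bar P_i},\lambda\right).
\]
Evaluating at $\lambda = 1$ — legitimate since every module here is Artinian, so each Hilbert series is a polynomial in $\lambda$ — turns each $H(-,1)$ into the corresponding length, and using $T/\bar P_{i_1}+\cdots+\bar P_{i_k} \cong S/(P_{i_1}+\cdots+P_{i_k}+\mm^{[s]}\mm^n)$ yields exactly \eqref{cor1:eq1}.

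For the ``in particular'' clause, observe that for any subset $F \subseteq \{1,\ldots,\alpha\}$ the sum $\sum_{j\in F} P_j$ is again generated by a subset of the variables, say $P_F = (x_i : i \in A_F)$ for some $A_F \subseteq \{1,\ldots,r\}$ with $|A_F| = d_F$. Then $S/(P_F + \mm^{[s]}\mm^n) \cong k[x_i : i \notin A_F] / (\text{image of }\mm^{[s]}\mm^n)$, and the image of $\mm^{[s]}\mm^n$ in this smaller polynomial ring in $r - d_F$ variables is precisely $\nn'^{[s]}\nn'^n$ where $\nn'$ is its maximal homogeneous ideal — because modding out by a subset of variables sends $\mm^{[s]}\mm^n$ to the analogous ideal, again a monomial-ideal computation. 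Hence each summand on the right of \eqref{cor1:eq1} equals $\ell(S'/\mm'^{[s]}\mm'^n)$ for a polynomial ring $S'$ in $r - d_F$ variables, which by Corollary \ref{corF(n)} is a piecewise polynomial in $s$ and $n$ (with the pieces cut out by the linear inequalities $1 \le n \le s$, $s+1 \le n \le (r-d_F-1)s-1$, $n \ge (r-d_F-1)s$). A finite signed sum of piecewise polynomials is piecewise polynomial, so $\ell(S/(I+\mm^{[s]}\mm^n))$ is piecewise polynomial in $(s,n)$.

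The main obstacle is not any single step but rather being careful about two bookkeeping points: first, justifying $I + \mm^{[s]}\mm^n = \cap_i(P_i + \mm^{[s]}\mm^n)$ so that Theorem \ref{thm1:alpha2} applies cleanly in the Artinian quotient — this is a short monomial-ideal argument but must be stated; and second, correctly tracking that each intersected sum of the $P_j$'s collapses to a polynomial subring so Corollary \ref{corF(n)} is applicable with the right number of variables. Neither is deep, but the piecewise structure (and hence where the polynomial pieces live) depends on getting these reductions exactly right, which will matter for the polynomiality-for-large-$s$ result that follows.
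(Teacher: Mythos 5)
Your proof is correct and follows essentially the same route as the paper: apply Theorem \ref{thm1:alpha2} to the Artinian ring $S/\mm^{[s]}\mm^n$, evaluate the Hilbert series at $\lambda=1$, and then reduce each summand to Corollary \ref{corF(n)} via the isomorphism $S/(P_{i_1}+\cdots+P_{i_j})\cong$ a polynomial ring. The only difference is that you spell out the monomial-ideal identity $I+\mm^{[s]}\mm^n=\cap_i(P_i+\mm^{[s]}\mm^n)$ needed to apply the theorem in the quotient, a point the paper leaves implicit.
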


\begin{proof}
	Since $S/\mm^{[s]}\mm^n$ is a standard graded Artinian ring, using Theorem \ref{thm1:alpha2} it follows that
	\begin{align} \label{eq1}
	 H\left(\frac{S}{I+\mm^{[s]}\mm^n}, \lambda \right) 
	 &= \sum_{i=1}^{\alpha} H\left(\frac{S}{P_i+\mm^{[s]}\mm^n}, \lambda \right) - \sum_{\substack{i,j=1\\i<j}}^{\alpha} H\left(\frac{S}{P_i+P_j+\mm^{[s]}\mm^n}, \lambda \right)+ \cdots \nonumber\\ 
	 &\hspace{6cm}+(-1)^{\alpha-1} H\left(\frac{S}{\sum_{i=1}^{\alpha} P_i+\mm^{[s]}\mm^n}, \lambda \right).
	\end{align}
	The modules involved on the right side of (\ref{eq1}) are finite length $S$-modules. Put $\lambda=1$ in (\ref{eq1}) to get (\ref{cor1:eq1}).
	Observe that $S/(P_{i_1}+\cdots+P_{i_j})$, for $i_1,\ldots,i_j \in \{1,\ldots,\alpha\}$, is isomorphic to a polynomial ring. Since image of $\mm$ in $S/(P_{i_1}+\cdots+P_{i_j})$ is a parameter ideal for all  $i_1,\ldots,i_j \in \{1,\ldots,\alpha\}$, using Corollary \ref{corF(n)} we obtain the required result.
\end{proof}

We are now ready to prove the main result of the section. We first consider the general case.

\subsection{The generalized Hilbert-Kunz function of $(\nn,\nn t)$}

\begin{Theorem} \label{NCM}
	Let $S = k[x_1,\ldots,x_r]$ be a polynomial ring in $r$ variables over a field $k$ and let $\mm$ be the maximal homogeneous ideal of $S.$ Let $P_1,\ldots,P_\alpha$, for $\alpha \geq 2$, be distinct $S$-ideals generated by subsets of $\{ x_1,\ldots,x_r \}.$ Let $I = \cap_{i=1}^\alpha P_i$ and $R=S/I.$ Suppose $\nn = \mm/I$ denotes the maximal homogeneous ideal of $R$ and $\dim(R)=d.$ Set $\delta = \max\{ |a_i(R)| \colon a_i(R) \neq - \infty \}.$
	Then for  $s > \delta$,
	\begin{align*}
	\ell\left(\frac{\R(\nn)}{(\nn,\nn t)^{[s]}}\right)
	\end{align*}
	is a polynomial in $s.$
\end{Theorem}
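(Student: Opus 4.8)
The plan is to compute $\ell(\R(\nn)/(\nn,\nn t)^{[s]})$ by splitting $\R(\nn)$ into graded pieces in the $t$-grading and computing each piece's contribution. First I would record that $(\nn,\nn t)^{[s]} = (\nn^{[s]}, \nn^{[s]}t^s)\R(\nn)$: the generators of $(\nn,\nn t)$ are $x_i$ and $x_it$, so their $s$-th powers are $x_i^s$ and $x_i^s t^s$. Writing $\R(\nn) = \oplus_{n\geq 0}\nn^n t^n$ and noting that $(\nn,\nn t)^{[s]}$ in degree $n$ equals $\nn^{[s]}\nn^n + \nn^{[s]}\nn^{n-s}$ (the second term present only when $n \geq s$, and interpreted inside $\nn^n$), I would get
\[
\ell\left(\frac{\R(\nn)}{(\nn,\nn t)^{[s]}}\right) = \sum_{n=0}^{s-1}\ell\left(\frac{\nn^n}{\nn^{[s]}\nn^n}\right) + \sum_{n\geq s}\ell\left(\frac{\nn^n}{\nn^{[s]}\nn^n + \nn^{[s]}\nn^{n-s}}\right).
\]
The key point is that the second sum is \emph{finite}: once $n$ is large enough (controlled by the reduction number of $\nn$, hence by $\delta$ via Theorems \ref{Marley} and \ref{Hoa}), one has $\nn^{[s]}\nn^{n-s} \supseteq \nn^n$ or at least the quotient stabilizes to $0$, because $\nn^{[s]} \supseteq (\text{a reduction of }\nn)^{[s]}$ and $(\nn^{[s]})^{\text{something}}$ eventually absorbs high powers. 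So the tail sum contributes only for $n$ in a bounded range, and each term there is a piecewise polynomial in $s$ and $n$ by Corollary \ref{cor1:alpha2} (applied to $R = S/I$, using $\nn^n = (\mm^n + I)/I$ and lifting everything to $S$).

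Next I would handle the first sum. For $n$ in the range $0 \leq n \leq s-1$ we have $\ell(\nn^n/\nn^{[s]}\nn^n) = \ell(R/\nn^{[s]}\nn^n) - \ell(R/\nn^n)$, and by Corollary \ref{cor1:alpha2} the first term is a piecewise polynomial in $s,n$ whose relevant branch (for $n \leq s$) is governed by the "$1 \leq n \leq s$" case of Corollary \ref{corF(n)} applied to each polynomial quotient $S/(P_{i_1}+\cdots+P_{i_j})$; in that branch the formula is $\ell(S/(\sum P)^{[s]}) + (\dim)\cdot\binom{n+\dim-1}{\dim}$, a genuine polynomial in $s$ and $n$ separately. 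Summing such a polynomial expression over $n = 0,\ldots,s-1$ — using that $\sum_{n=0}^{s-1}\binom{n+c}{c} = \binom{s+c}{c+1}$ and that $\sum_{n=0}^{s-1} s^{d}= s^{d+1}$, etc. — produces a polynomial in $s$ alone. The term $\ell(R/\nn^n)$ is the Hilbert–Samuel polynomial of Theorem \ref{faceringpoly} for $n$ large (and the finitely many correction terms for small $n$ only shift by a constant once $s$ is large), so its sum over $n=0,\ldots,s-1$ is again a polynomial in $s$ for $s > \delta$ (here $n(\nn) \leq \delta$ is used so that $\ell(R/\nn^n) = P_\nn(n)$ throughout the summation range).

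Assembling the pieces: for $s > \delta$ the tail sum is a fixed finite sum of piecewise-polynomial terms each evaluated on its eventual branch — hence a polynomial in $s$ — and the head sum $\sum_{n=0}^{s-1}$ is a sum of a polynomial expression in $(s,n)$ over a full interval of $n$, which is a polynomial in $s$. Their sum is therefore a polynomial in $s$. I would isolate the threshold $s > \delta$ precisely at the two places it is needed: to guarantee $r(\nn^{[s]}$-type behavior$)$ / that the tail stabilizes (via Theorem \ref{Hoa}, whose hypothesis is exactly $s > \max|a_i|$), and to guarantee $\ell(R/\nn^n)$ agrees with its Hilbert polynomial on $0 < n \leq s-1$. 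The main obstacle I anticipate is the bookkeeping for the tail: showing cleanly that $\nn^{[s]}\nn^{n-s} + \nn^{[s]}\nn^n$ fills up $\nn^n$ for all $n$ beyond an $s$-independent bound (or more precisely beyond $\lfloor n(\nn)/s\rfloor + d + s$ or similar), so that the number of nonzero tail terms is bounded independently of $s$ and their branch in the piecewise description is the "$n$ large" branch; this is where the Cohen–Macaulay reduction-number machinery of Section 2 has to be invoked carefully, and where the special structure of $R$ (intersection of "coordinate" primes) is used to know $G(\nn)$ has good depth. Once that is in hand, everything else is summation of polynomials.
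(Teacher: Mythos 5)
Your overall strategy coincides with the paper's: decompose $\R(\nn)/(\nn,\nn t)^{[s]}$ along the $t$-grading, note that the degree-$n$ piece is $\nn^n/\nn^{[s]}\nn^{n-s}$ for $n\geq s$ and $\nn^n/\nn^{[s]}\nn^n$ for $n<s$, truncate the tail using the reduction number of $\nn^s$ (Theorem \ref{Hoa}, which is precisely where the hypothesis $s>\delta$ enters), and evaluate each piece by inclusion--exclusion (Corollary \ref{cor1:alpha2}) together with the piecewise formulas of Corollary \ref{corF(n)}, Theorem \ref{faceringpoly} and Theorem \ref{conca}.

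There is, however, a concrete error in your treatment of the tail, which you yourself flag as the main obstacle. Since $\nn^{[s]}$ is a reduction of $\nn^s$ with $r(\nn^s)=d-j$ for some $j\in\{0,1\}$ (Theorem \ref{Hoa}), the identity $\nn^{[s]}(\nn^s)^m=(\nn^s)^{m+1}$ for $m\geq d-j$ translates into $\nn^{[s]}\nn^{n-s}=\nn^n$ only for $n\geq (d-j+1)s$: the reduction number must be rescaled by $s$ when converted from powers of $\nn^s$ to powers of $\nn$. Hence the nonvanishing tail terms occupy the range $s\leq n\leq (d-j+1)s-1$, which contains about $(d-j)s$ terms; this grows linearly in $s$ and is \emph{not} ``bounded independently of $s$,'' and your proposed threshold $\lfloor n(\nn)/s\rfloor+d+s$ is off by exactly this rescaling. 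The assembling step ``the tail sum is a fixed finite sum of piecewise-polynomial terms'' therefore fails as stated. The conclusion survives because the tail, like the head, is a sum of a piecewise polynomial in $(s,n)$ over an interval whose endpoints and breakpoints are linear in $s$, so the same binomial-summation argument you apply to $\sum_{n=0}^{s-1}$ works -- but it must be carried out over the full range up to $(d-j+1)s-1$, which is what the paper does. Two smaller points: Theorem \ref{Marley} and the Cohen--Macaulay/depth considerations you invoke are neither available nor needed here, since $R$ need not be Cohen--Macaulay in this theorem and only Theorem \ref{Hoa} is used (with $G(\nn)\simeq R$ because $R$ is standard graded); and no postulation issue arises for $\ell(R/\nn^n)$, since Theorem \ref{faceringpoly} is an exact formula valid for all $n\geq 1$.
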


\begin{proof}
	Since $R$ is a standard graded ring, it follows that $R \simeq G(\nn).$ Let $s > \delta.$ Using Theorem \ref{Hoa}, it follows that
	\begin{align*}
	r(\nn^s) = 
	\begin{cases}
	d-1 & \text{ if } a_d(R)<0, \\
	d & \text{ if } a_d(R)=0.
	\end{cases}
	\end{align*}
	In other words, $r(\nn^s)=d-j$, where $j$ is either $0$ or $1$ as per the above observation. As $\nn^{[s]}$ is a minimal reduction of $\nn^s$, we get, $\nn^{[s]}\nn^{(d-j)s} = \nn^{(d-j+1)s}.$ In other words, $\nn^{[s]} \nn^{n-s} = \nn^n$, for all $n \geq (d-j+1)s.$ This implies that
	\begin{align*}
	(\nn,\nn t)^{[s]} 
	= (\nn^{[s]}, \nn^{[s]}t^s) 
	&= \left(\bigoplus_{n=0}^{s-1} \nn^{[s]}\nn^nt^n \right) + \left(\bigoplus_{n \geq s}  \nn^{[s]}\nn^{n-s} t^n \right) \\
	&= \left(\bigoplus_{n=0}^{s-1} \nn^{[s]}\nn^nt^n \right) + \left(\bigoplus_{n=s}^{(d-j+1)s-1} \nn^{[s]}\nn^{n-s} t^n \right) + \left(\bigoplus_{n \geq (d-j+1)s} \nn^nt^n \right). 
	\end{align*}
	Therefore, for  $s > \delta$,
	\begin{align*}
	\ell\left(\frac{\mathcal{R}(\nn)}{(\nn,\nn t)^{[s]}}\right)
	&= \sum_{n=0}^{s-1} \ell\left(\frac{\nn^n}{\nn^{[s]}\nn^n}\right) 
	+ \sum_{n=s}^{(d-j+1)s-1} \ell\left(\frac{\nn^n}{\nn^{[s]}\nn^{n-s}} \right) \\
	&= \sum_{n=0}^{s-1} \ell\left(\frac{R}{\nn^{[s]}\nn^n}\right) 
	+ \sum_{n=s}^{(d-j+1)s-1} \ell\left(\frac{R}{\nn^{[s]}\nn^{n-s}} \right) 
	- \sum_{n=0}^{(d-j+1)s-1} \ell\left(\frac{R}{\nn^n} \right) \\
	&= \sum_{n=1}^{s-1} \ell\left(\frac{S}{I+\mm^{[s]}\mm^n}\right) 
	+ \sum_{n=s+1}^{(d-j+1)s-1} \ell\left(\frac{S}{I+\mm^{[s]}\mm^{n-s}} \right) 
	- \sum_{n=1}^{(d-j+1)s-1} \ell\left(\frac{R}{\nn^n} \right) + 2 \ \ell\left(\frac{R}{\nn^{[s]}}\right) \\
	&= 2 \ \sum_{n=1}^{s-1} \ell\left(\frac{S}{I+\mm^{[s]}\mm^n}\right) 
	+ \sum_{n=s}^{(d-j)s-1} \ell\left(\frac{S}{I+\mm^{[s]}\mm^n} \right) 
	- \sum_{n=1}^{(d-j+1)s-1} \ell\left(\frac{R}{\nn^n} \right) + 2 \ \ell\left(\frac{R}{\nn^{[s]}}\right).
	\end{align*}
	
	The result now follows from Corollary \ref{cor1:alpha2}, Theorem \ref{faceringpoly} and Theorem \ref{conca}.
\end{proof}

\subsection{The generalized Hilbert-Kunz function of $(\nn,\nn t)$ for Cohen-Macaulay $k[\Delta]$}

\begin{Theorem} \label{CM}
	Let $S = k[x_1,\ldots,x_r]$ be a polynomial ring in $r$ variables over a field $k$ and let $\mm$ be the maximal homogeneous ideal of $S.$ Let $P_1,\ldots,P_\alpha$, for $\alpha \geq 2$, be distinct $S$-ideals generated by subsets of $\{ x_1,\ldots,x_r \}.$ Let $I = \cap_{i=1}^\alpha P_i$ and $R=S/I.$ Suppose $\nn = \mm/I$ denotes the maximal homogeneous ideal of $R$ and $\dim(R)=d.$ Suppose that $R$ is Cohen-Macaulay. Then
	\begin{align*}
	\ell\left(\frac{\R(\nn)}{(\nn,\nn t)^{[s]}}\right)
	\end{align*}
	is given by a polynomial for $s\geq 1.$
\end{Theorem}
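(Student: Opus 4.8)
The plan is to repeat the argument of Theorem \ref{NCM} but to exploit the Cohen-Macaulay hypothesis to eliminate the dependence on $\delta$, so that the polynomial formula holds for \emph{all} $s \geq 1$ rather than merely for $s > \delta$. The key point is that when $R$ is Cohen-Macaulay, we get much better control on the reduction number of $\nn^s$: since $R$ is a standard graded Stanley-Reisner ring that is Cohen-Macaulay, $R \simeq G(\nn)$ is itself Cohen-Macaulay, so $\grade(G(\nn)_+) = d$. This lets us invoke Theorem \ref{Marley}: for $k \geq 1$, $r(\nn^k) = \lfloor n(\nn)/k \rfloor + d$. Now the Hilbert-Samuel function $\ell(R/\nn^s)$ is given by the polynomial in Theorem \ref{faceringpoly} for \emph{all} $s \geq 1$, which forces $n(\nn) < 1$, i.e. $n(\nn) \leq 0$. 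Hence for every $s \geq 1$ we have $\lfloor n(\nn)/s \rfloor \leq 0$, and in fact $\lfloor n(\nn)/s\rfloor \in \{-1, 0\}$ is bounded; combined with $r(\nn^s) \geq 0$ one deduces $r(\nn^s) \in \{d-1, d\}$ for all $s \geq 1$ (the CM property of $R$ also guarantees $a_d(R)$ is the only $a$-invariant that can be nonnegative, matching the dichotomy in Theorem \ref{NCM}). The upshot: $\nn^{[s]}\nn^{n-s} = \nn^n$ for all $n \geq (d-j+1)s$ and all $s \geq 1$, where $j \in \{0,1\}$.

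With this uniform reduction-number bound in hand, the decomposition of $(\nn,\nn t)^{[s]}$ carried out in the proof of Theorem \ref{NCM} goes through verbatim for every $s \geq 1$:
\[
(\nn,\nn t)^{[s]} = \left(\bigoplus_{n=0}^{s-1} \nn^{[s]}\nn^n t^n\right) + \left(\bigoplus_{n=s}^{(d-j+1)s-1} \nn^{[s]}\nn^{n-s} t^n\right) + \left(\bigoplus_{n \geq (d-j+1)s} \nn^n t^n\right),
\]
and the same length computation yields
\[
\ell\!\left(\frac{\R(\nn)}{(\nn,\nn t)^{[s]}}\right) = 2\sum_{n=1}^{s-1}\ell\!\left(\frac{S}{I+\mm^{[s]}\mm^n}\right) + \sum_{n=s}^{(d-j)s-1}\ell\!\left(\frac{S}{I+\mm^{[s]}\mm^n}\right) - \sum_{n=1}^{(d-j+1)s-1}\ell\!\left(\frac{R}{\nn^n}\right) + 2\,\ell\!\left(\frac{R}{\nn^{[s]}}\right).
\]
By Corollary \ref{cor1:alpha2} each summand $\ell(S/(I+\mm^{[s]}\mm^n))$ is a piecewise polynomial in $s$ and $n$ (valid for all $s, n \in \NN$); by Theorem \ref{faceringpoly} the term $\ell(R/\nn^n)$ is a genuine polynomial in $n$ for all $n \geq 1$; and by Theorem \ref{conca} the term $\ell(R/\nn^{[s]}) = \sum_i f_{i-1}(s-1)^i$ is a polynomial in $s$ for all $s \geq 1$. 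Summing a piecewise polynomial over a range $1 \leq n \leq s-1$ (resp. $s \leq n \leq (d-j)s-1$) whose breakpoints are linear in $s$ produces a polynomial in $s$ by the standard fact that $\sum_{n=a(s)}^{b(s)} g(s,n)$ is polynomial whenever $g$ is polynomial and $a, b$ are linear — applied separately on each piece, since for $s \geq 1$ the partition of $[1, (d-j)s-1]$ into the regions of Corollary \ref{corF(n)} has endpoints that are linear functions of $s$. Thus the whole expression is a polynomial in $s$ for $s \geq 1$.

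The main obstacle is the verification that $n(\nn) \leq 0$ and hence that the reduction-number dichotomy holds for all $s \geq 1$ with no small-$s$ exceptions — i.e., genuinely removing the hypothesis $s > \delta$ of Theorem \ref{NCM}. One must check carefully that Theorem \ref{faceringpoly} really gives the Hilbert-Samuel \emph{function} (not merely the Hilbert-Samuel polynomial for large $s$) for all $s \geq 1$, so that the postulation number is indeed at most $0$; this is where the Cohen-Macaulay assumption on $k[\Delta]$ is essential, as it is what makes the formula of Theorem \ref{faceringpoly} hold down to $s = 1$ and simultaneously pins down which $a$-invariants can be nonnegative. A secondary, purely bookkeeping, point is to confirm that when $1 \leq s$ is small the various summation ranges do not become empty or overlap in an unintended way (e.g. when $s = 1$ the first sum is empty and the middle sum must be read correctly); these degenerate cases are handled by noting that an empty sum contributes $0$, which is consistent with the polynomial formula obtained for larger $s$.
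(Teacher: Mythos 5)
Your overall strategy --- rerun the decomposition from Theorem \ref{NCM} after using Marley's theorem to pin down $r(\nn^s)$ for every $s\geq 1$ --- is the same as the paper's, but the key step contains a genuine error. From $n(\nn)\leq 0$ you conclude that $\lfloor n(\nn)/s\rfloor\in\{-1,0\}$ and hence $r(\nn^s)\in\{d-1,d\}$ for all $s\geq 1$. That is only true for $s\geq |n(\nn)|$: the paper establishes $-d<n(\nn)\leq 0$, and when $n(\nn)\leq -2$ and $s<|n(\nn)|$ Marley's formula $r(\nn^s)=\lfloor n(\nn)/s\rfloor+d$ gives a strictly smaller value. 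A concrete instance inside the scope of the theorem: two $2$-simplices glued along an edge give a shellable (hence Cohen--Macaulay) complex with $h$-vector $(1,1,0,0)$, so $d=3$ and $n(\nn)=-2$, and for $s=1$ Marley yields $r(\nn)=1=d-2$, not $d-1$ or $d$. This is exactly the case the paper treats separately by writing $|n(\nn)|=k_1s+k_2$ and obtaining $r(\nn^s)\in\{d-k_1,\,d-k_1-1,\,d-1\}$; your parenthetical ``combined with $r(\nn^s)\geq 0$ one deduces\dots'' does not repair the claim.

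The error is likely repairable, because the decomposition of $(\nn,\nn t)^{[s]}$ only requires an \emph{upper} bound on the reduction number: if $r(\nn^s)\leq d-1$, then $\nn^{[s]}\nn^{n-s}=\nn^n$ already holds for all $n\geq ds$, and the summands one includes by pretending $r(\nn^s)=d-1$ are zero in the range where the equality holds earlier. But you must say this explicitly rather than assert the false dichotomy. A secondary inaccuracy: you locate the role of the Cohen--Macaulay hypothesis in making Theorem \ref{faceringpoly} valid down to $s=1$; that theorem holds for every face ring, so $n(\nn)\leq 0$ is unconditional. What the Cohen--Macaulay hypothesis actually buys is the applicability of Marley's theorem (which needs a Cohen--Macaulay ring with $\grade(G(\nn)_+)\geq d-1$) for \emph{all} $s\geq 1$; in the general case the paper must instead fall back on Hoa's asymptotic result, which is why Theorem \ref{NCM} carries the restriction $s>\delta$.
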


\begin{proof}
	Since $R$ is a standard graded ring, it follows that $R \simeq G(\nn).$ Let $h(\Delta)=(h_0,\ldots,h_d)$ denote the $h$-vector of $R.$ Note that $-d < n(\nn) \leq 0.$ If $n(\nn)=-d$, then $h_0=1$ and $h_i=0$ for all $i \neq 0$, implying that $0 = h_1 = r-d.$ It follows that $I$ is a height zero ideal, which is not true. Hence, $-d < n(\nn) \leq 0.$
	
	Suppose $n(\nn)=0.$ Using Theorem \ref{Marley}, it follows that $r(I^s)=d$, for all $s \geq 1.$ Using the same arguments as in the proof of Theorem \ref{NCM}, it follows that for $s \geq 1,$
	\begin{align*}
	\ell\left(\frac{\mathcal{R}(\nn)}{(\nn,\nn t)^{[s]}}\right)
	= 2 \ \sum_{n=1}^{s-1} \ell\left(\frac{S}{I+\mm^{[s]}\mm^n}\right) 
	+ \sum_{n=s}^{ds-1} \ell\left(\frac{S}{I+\mm^{[s]}\mm^n} \right) 
	- \sum_{n=1}^{(d+1)s-1} \ell\left(\frac{R}{\nn^n} \right) + 2 \ \ell\left(\frac{R}{\nn^{[s]}}\right).
	\end{align*}
	The result now follows from Corollary \ref{cor1:alpha2}, Theorem \ref{faceringpoly} and Theorem \ref{conca}.
	
	Now suppose that $n(\nn)<0.$ If $s < |n(\nn)|,$ write $|n(\nn)| = k_1s + k_2$, where $k_2 \in \{0,1,\ldots,s-1\}.$ Using Theorem \ref{Marley}, it follows that 
	\begin{align*}
	r(\nn^s) = 
	\begin{cases}
	d-k_1 & \text{ if } s < |n(\nn)|, \, k_2=0, \\
	d-k_1-1 & \text{ if } s < |n(\nn)|, \, k_2 \neq 0, \\
	d-1 & \text{ if } s \geq |n(\nn)|. 
	\end{cases}
	\end{align*}
	
	In other words, $r(\nn^s)=d-j,$ where $j \in \{1,k_1,k_1+1\}$ as per the above observation. Using the same arguments as in the proof of Theorem \ref{NCM}, we are done.
\end{proof}

\section{Examples}
 In this section, we illustrate the above results using some examples.

\begin{Example}{\rm
	Let $\Delta$ be the simplicial complex 
	\begin{center}
		\begin{tikzpicture}
		\draw (0,0) -- (1.5,0);
		\draw (2.5,0) -- (4,0);
		\filldraw (0,0) circle (2pt) node[anchor=north]{$x_1$};		
		\filldraw (1.5,0) circle (2pt) node[anchor=north]{$x_2$};
		\filldraw (2.5,0) circle (2pt) node[anchor=north]{$x_3$};
		\filldraw (4,0) circle (2pt) node[anchor=north]{$x_4$};
		\end{tikzpicture}
	\end{center}
	Then $R = k[x_1,x_2,x_3,x_4]/((x_1, x_2) \cap (x_3,x_4))$ is the face ring of $\Delta.$ Observe that $R$ is a $2$-dimensional ring with $f$-vector $f(\Delta) = (1,4,2)$ and $h$-vector $h(\Delta) = (1,2,-1).$ Set $S=k[x_1, x_2, x_3, x_4],$ $P_1 = (x_1,x_2)$, $P_2 = (x_3,x_4).$ Since $\depth(R)=1$, it follows that $a_0(R) = -\infty.$ In order to find $a_1(R)$ and $a_2(R)$, we consider the following short exact sequence. 
	\begin{align*}
		0 \longrightarrow \frac{S}{P_1 \cap P_2} \longrightarrow \frac{S}{P_1} \bigoplus \frac{S}{P_2} \longrightarrow \frac{S}{P_1+P_2} \longrightarrow 0
	\end{align*}
	Using the corresponding long exact sequence of local cohomology modules, it follows that 
	\[ \HH^1_{\nn}(R) \simeq \HH^0_{\mm}(S/(P_1+P_2)) \text{ and } \HH^2_{\nn}(R) \simeq \HH^2_{(x_3, x_4)}(k[x_3, x_4]) \oplus \HH^2_{(x_1,x_2)}(k[x_1,x_2]). \]
	This implies that $a_1(R) = 0$ and $a_2(R)=-2.$ Hence, $\delta = \max\{ |a_i(R)| \colon a_i(R) \neq - \infty \} = 2.$ Since $a_2(R)<0$, using Theorem \ref{NCM} it follows that for all $s > 2,$
	\begin{align*}
		\ell\left(\frac{\mathcal{R}(\nn)}{(\nn,\nn t)^{[s]}}\right)
		&= 2 \ \sum_{n=1}^{s-1} \ell\left(\frac{S}{I+\mm^{[s]}\mm^n}\right) 
		- \sum_{n=1}^{2s-1} \ell\left(\frac{R}{\nn^n} \right) + 2 \ \ell\left(\frac{R}{\nn^{[s]}}\right).
	\end{align*}
	From Corollary \ref{cor1:alpha2}, Theorem \ref{faceringpoly} and Theorem \ref{conca}, we obtain
	\begin{align*}
		\ell\left(\frac{\mathcal{R}(\nn)}{(\nn,\nn t)^{[s]}}\right)
		&= 2 \sum_{n=1}^{s-1} \left[ \ell\left(\frac{S}{P_1+\mm^{[s]}\mm^n}\right) + \ell\left(\frac{S}{P_2+\mm^{[s]}\mm^n}\right) - \ell\left(\frac{S}{P_1+P_2+\mm^{[s]}\mm^n}\right) \right] \\
		&- \sum_{i=1}^{2s-1} \left[ \sum_{i=0}^{2} (-1)^i \frac{\h^{(i)}(1)}{i!} \binom{n+1-i}{2-i} \right] + 2 \sum_{i=0}^{2} f_{i-1} (s-1)^i.
	\end{align*}
	Substituting the values and using Corollary \ref{corF(n)}, we get
	\begin{align*}
		\ell\left(\frac{\mathcal{R}(\nn)}{(\nn,\nn t)^{[s]}}\right)
		= 2 \sum_{n=1}^{s-1} \bigg[ 2(s^2+n^2+n) -1 \bigg] - \sum_{n=1}^{2s-1} \bigg[ 2\binom{n+1}{2} -1 \bigg] + 2 \bigg[ 1 + 4(s-1) + 2(s-1)^2 \bigg].
	\end{align*}
	Simplifying the above expression, we obtain that for all $s > 2$,
	\begin{align*}
		\ell\left(\frac{\mathcal{R}(\nn)}{(\nn,\nn t)^{[s]}}\right)
		&= \frac{8}{3} s^3 - \frac{2}{3} s - 1 \\
		&= 16 \binom{s+2}{3} - 16 \binom{s+1}{2} + 2s - 1.
	\end{align*}
}\end{Example}

\begin{Example}{\rm
	Let $\Delta$ be the simplicial complex
	\begin{center}
		\begin{tikzpicture}
		\draw (0,0) -- (1.5,0);
		\filldraw[fill=gray, draw=black] (1.5,0) -- (2.5,1) -- (3.5,0) -- cycle;
		\filldraw (0,0) circle (2pt) node[anchor=north]{$x_1$};		
		\filldraw (1.5,0) circle (2pt) node[anchor=north]{$x_2$};
		\filldraw (2.5,1) circle (2pt) node[anchor=west]{$x_3$};
		\filldraw (3.5,0) circle (2pt) node[anchor=north]{$x_4$};
		\end{tikzpicture}
	\end{center}
	Then $R = k[x_1,x_2,x_3,x_4]/((x_1) \cap (x_3,x_4))$ is the face ring of $\Delta.$ Observe that $R$ is a $3$-dimensional ring with $f$-vector $f(\Delta) = (1,4,4,1)$ and $h$-vector $h(\Delta) = (1,1,-1,0).$ Set $S=k[x_1, x_2, x_3, x_4],$ $P_1 = (x_3,x_4)$, $P_2 = (x_1).$ Since $\depth(R)=2$, it follows that $a_0(R) = a_1(R) = -\infty.$ In order to find $a_2(R)$ and $a_3(R)$, we consider the following short exact sequence. 
	\begin{align*}
		0 \longrightarrow \frac{S}{P_1 \cap P_2} \longrightarrow \frac{S}{P_1} \bigoplus \frac{S}{P_2} \longrightarrow \frac{S}{P_1+P_2} \longrightarrow 0
	\end{align*}
	Using the corresponding long exact sequence of local cohomology modules, we get 
	\[ \HH^3_{\nn}(R) \simeq \HH^3_{(x_2, x_3, x_4)}(k[x_2, x_3, x_4]) \text{ and } 0 \rightarrow \HH^1_{(x_2)}(k[x_2]) \rightarrow \HH^2_{\nn}(R) \rightarrow \HH^2_{(x_1,x_2)}(k[x_1, x_2]) \rightarrow 0. \]
	This implies that $a_2(R) = -1$ and $a_3(R)=-3.$ Hence, $\delta = \max\{ |a_i(R)| \colon a_i(R) \neq - \infty \} = 3.$ Since $a_3(R)<0$, using Theorem \ref{NCM} it follows that for all $s > 3,$
	\begin{align*}
	\ell\left(\frac{\mathcal{R}(\nn)}{(\nn,\nn t)^{[s]}}\right)
	&= 2 \ \sum_{n=1}^{s-1} \ell\left(\frac{S}{I+\mm^{[s]}\mm^n}\right) 
	+ \sum_{n=s}^{2s-1} \ell\left(\frac{S}{I+\mm^{[s]}\mm^n} \right) 
	- \sum_{n=1}^{3s-1} \ell\left(\frac{R}{\nn^n} \right) + 2 \ \ell\left(\frac{R}{\nn^{[s]}}\right).
	\end{align*}
	From Corollary \ref{cor1:alpha2}, Theorem \ref{faceringpoly} and Theorem \ref{conca}, we obtain
	\begin{align*}
	\ell\left(\frac{\mathcal{R}(\nn)}{(\nn,\nn t)^{[s]}}\right)
	&= 2 \sum_{n=1}^{s-1} \left[ \ell\left(\frac{S}{P_1+\mm^{[s]}\mm^n}\right) + \ell\left(\frac{S}{P_2+\mm^{[s]}\mm^n}\right) - \ell\left(\frac{S}{P_1+P_2+\mm^{[s]}\mm^n}\right) \right] \\
	&+\sum_{n=s}^{2s-1} \left[ \ell\left(\frac{S}{P_1+\mm^{[s]}\mm^n}\right) + \ell\left(\frac{S}{P_2+\mm^{[s]}\mm^n}\right) - \ell\left(\frac{S}{P_1+P_2+\mm^{[s]}\mm^n}\right) \right] \\
	&- \sum_{i=1}^{3s-1} \left[ \sum_{i=0}^{3} (-1)^i \frac{\h^{(i)}(1)}{i!} \binom{n+d-i-1}{d-i} \right] + 2 \sum_{i=0}^{3} f_{i-1} (s-1)^i.
	\end{align*}
	Substituting the values and using Corollary \ref{corF(n)}, we get
	\begin{align*}
	\ell\left(\frac{\mathcal{R}(\nn)}{(\nn,\nn t)^{[s]}}\right)
	&= 2 \sum_{n=1}^{s-1} \bigg[ (s^2+n^2+n) + s^3 + 3 \binom{n+2}{3} -(s+n) \bigg] \\
	&+ \sum_{n=s}^{2s-1} \bigg[ \binom{n+s+1}{2} + s^3 + 3\binom{n+2}{3} -3 \binom{n-s+2}{3} -(s+n) \bigg] \\
	&- \sum_{n=1}^{3s-1} \bigg[ \binom{n+2}{3} + \binom{n+1}{2} -n \bigg] + 2 \bigg[ 1 + 4(s-1) + 4(s-1)^2 + (s-1)^3 \bigg].
	\end{align*}
	Simplifying the above expression, we obtain that for all $s > 3$,
	\begin{align*}
	\ell\left(\frac{\mathcal{R}(\nn)}{(\nn,\nn t)^{[s]}}\right)
	&= \frac{13}{8} s^4 + \frac{13}{12} s^3 - \frac{9}{8} s^2 - \frac{7}{12} s \\
	&= 39 \binom{s+3}{4} - 52 \binom{s+2}{3} + 14 \binom{s+1}{2}.
	\end{align*}
}\end{Example}

\begin{Example}{\rm
	Let $\Delta$ be the simplicial complex 
	\begin{center}
		\begin{tikzpicture}
		\draw (0,0) -- (1.5,0);
		\draw (1.5,0) -- (2.5,1) -- (3.5,0) -- cycle;
		\filldraw (0,0) circle (2pt) node[anchor=north]{$x_1$};		
		\filldraw (1.5,0) circle (2pt) node[anchor=north]{$x_2$};
		\filldraw (2.5,1) circle (2pt) node[anchor=west]{$x_3$};
		\filldraw (3.5,0) circle (2pt) node[anchor=north]{$x_4$};
		\end{tikzpicture}
	\end{center}
	Then $R = k[x_1,x_2,x_3,x_4]/((x_3,x_4) \cap (x_1,x_3) \cap (x_1,x_4) \cap (x_1,x_2))$ is the face ring of $\Delta.$ Observe that $R$ is a $2$-dimensional Cohen-Macaulay ring with $f$-vector $f(\Delta) = (1,4,4)$ and $h$-vector $h(\Delta) = (1,2,1).$ This implies that $n(\nn)=0.$ Using Theorem \ref{CM}, it follows that for $s \geq 1,$
	\begin{align*}
		\ell\left(\frac{\mathcal{R}(\nn)}{(\nn,\nn t)^{[s]}}\right)
		&= 2 \ \sum_{n=1}^{s-1} \ell\left(\frac{S}{I+\mm^{[s]}\mm^n}\right) 
		+ \sum_{n=s}^{2s-1} \ell\left(\frac{S}{I+\mm^{[s]}\mm^n} \right) 
		- \sum_{n=1}^{3s-1} \ell\left(\frac{R}{\nn^n} \right) + 2 \ \ell\left(\frac{R}{\nn^{[s]}}\right).
	\end{align*}
	Substituting, we get
	\begin{align*}
		\ell\left(\frac{\mathcal{R}(\nn)}{(\nn,\nn t)^{[s]}}\right)
		&= 2 \sum_{n=1}^{s-1} \bigg[ 4(s^2+n^2+n) - 4(s+n) + 1 \bigg]
		+ \sum_{n=s}^{2s-1} \bigg[ 4\binom{n+s+1}{2} - 4(s+n) + 1 \bigg] \\
		&- \sum_{n=1}^{3s-1} \bigg[ 4\binom{n+1}{2} - 4n + 1 \bigg] + 2 \bigg[ 1 + 4(s-1) + 4(s-1)^2 \bigg].
	\end{align*}
	Simplifying the above expression, we obtain that for all $s \geq 1$,
	\begin{align*}
		\ell\left(\frac{\mathcal{R}(\nn)}{(\nn,\nn t)^{[s]}}\right)
		&= \frac{16}{3} s^3 - 4 s^2 - \frac{4}{3} s + 1\\
		&= 32 \binom{s+2}{3} - 40 \binom{s+1}{2} + 8s + 1.
	\end{align*}
}\end{Example}

\begin{Example}{\rm
	Let $\Delta$ be a $1$-dimensional simplicial complex on $r$ vertices, for some $r \geq 3:$
	\medskip
	\begin{center}
		\begin{tikzpicture}
		\draw (0,0) -- (2.5,0);
		\filldraw (0,0) circle (2pt) node[anchor=north]{$x_1$};
		\filldraw (1,0) circle (2pt) node[anchor=north]{$x_2$};
		\filldraw (2,0) circle (2pt) node[anchor=north]{$x_3$};
		\draw [dashed] (3,0) -- (4,0);
		\draw (4.5,0) -- (6,0);
		\filldraw (5,0) circle (2pt) node[anchor=north]{$x_{r-1}$};
		\filldraw (6,0) circle (2pt) node[anchor=north]{$x_r$};
		\end{tikzpicture}
	\end{center}
	
	For $i=1,\ldots,r-1$, set $P_i = \big( \{x_1,\ldots,x_r\} \setminus \{x_i, x_{i+1}\} \big).$ Then $R=k[x_1,\ldots,x_r]/\cap_{i=1}^{r-1} P_i$ is the face ring of $\Delta.$ It is a two-dimensional Cohen-Macaulay ring with $f$-vector $f(\Delta) = (1,r,r-1)$ and $h$-vector $h(\Delta) = (1,r-2,0).$ Since the $a$-invariant $a_{2}(R)=-1$, using Theorem \ref{CM}, it follows that for $s \geq 1,$
	\begin{align*}
	&\ell\left(\frac{\mathcal{R}(\nn)}{(\nn,\nn t)^{[s]}}\right) \\
	&= 2 \sum_{n=1}^{s-1} \ell\left(\frac{S}{I+\mm^{[s]}\mm^n}\right) - \sum_{n=1}^{2s-1} \ell\left(\frac{R}{\nn^n}\right) + 2 \ \ell\left(\frac{R}{\nn^{[s]}}\right) \\
	&= 2 \sum_{n=1}^{s-1} \left[ \sum_{i=1}^{r-1} \ell\left(\frac{S}{P_i+\mm^{[s]}\mm^n}\right) - \sum_{\substack{i,j=1\\i<j}}^{r-1} \ell\left(\frac{S}{P_i+P_j+\mm^{[s]}\mm^n}\right)+ \cdots+(-1)^{r-2} \ell\left(\frac{S}{\sum_{i=1}^{r-1} P_i+\mm^{[s]}\mm^n}\right) \right] \\
	&- \sum_{n=1}^{2s-1} \sum_{i=0}^{2} (-1)^i \frac{\h^{(i)}(1)}{i!} \binom{n+1-i}{2-i} + 2 \sum_{i=0}^{2} f_{i-1} (s-1)^i.
	\end{align*}
	Observe that in this case, using Corollary \ref{corF(n)}, it follows that $\ell(S/(P_i+\mm^{[s]}\mm^n)) = s^2+n^2+n$, for all $1 \leq n \leq s-1$ and for all $i=1,\ldots,r-1.$ For $1 \leq i < j \leq r-1$, if $\{x_i,x_{i+1}\} \cap \{x_{j}, x_{j+1}\} \neq \emptyset$, then $S/(P_i+P_j) \simeq k{[x]}$ and there are $r-2$ such instances. Otherwise, $S/(P_i+P_j) \simeq k.$ It is also easy to observe that $S/(P_{i_1}+\cdots+P_{i_u}) \simeq k,$ for all $u \geq 3$ and $i_1,\ldots,i_u \in \{1,\ldots,r-1 \}.$ Therefore,
	\begin{align*}
	&\ell\left(\frac{\mathcal{R}(\nn)}{(\nn,\nn t)^{[s]}}\right) \\
	&= 2 \sum_{n=1}^{s-1} \left[ (r-1)(s^2+n^2+n) - \Big[(r-2)(s+n) + \binom{r-1}{2} - (r-2) \Big] + \binom{r-1}{3} + \cdots + (-1)^{r-2} \right] \\
	&- \sum_{n=1}^{2s-1} \left[ (r-1)\binom{n+1}{2} - (r-2)n \right] + 2 \left[ 1+r(s-1) + (r-1)(s-1)^2 \right].
	\end{align*}
	Since, $\sum_{i=2}^{r-1} (-1)^i \binom{r-1}{i} = r-2,$ simplifying the above expression we get
	\begin{align*}
	\ell\left(\frac{\mathcal{R}(\nn)}{(\nn,\nn t)^{[s]}}\right)
	&= \frac{4}{3} (r-1)s^3 - (r-2) s^2 - \frac{(r-1)}{3} s \\
	&= 8(r-1) \binom{s+2}{3} - 2(5r-6)\binom{s+1}{2} + (2r-3)s.
	\end{align*}
}\end{Example} 

  We need some terminologies for the next example.  
 \begin{Definition}
Let $G$ be a finite simple graph with vertices $V=V(G)=\{x_1, \ldots, x_n\}$ and the edges $E=E(G)$. The edge ideal of $I(G)$ of $G$ is defined to be the ideal in $K[x_1,\ldots,x_n]$ generated by the square free quadratic monomials representing the edges of $G$, i.e., 
\[ I(G) = \langle x_ix_j \mid x_ix_j \in E \rangle. \]
\end{Definition}

A vertex cover of a graph is a set of vertices such that every edge has at least one vertex belonging to that set. A minimal vertex cover is a vertex cover such that none of its subsets is a vertex cover. For any graph $G$ with the set of all minimal vertex covers $C$, the edge ideal $I(G)$ has the primary decomposition: 
\[ I(G)= \bigcap_{\{x_{i_1}, \ldots, x_{i_u}\}\in C} (x_{i_1}, \ldots, x_{i_u}). \]

For example, when $G$ is a five cycle, the primary decomposition of the edge ideal $I(G)$ is 
\[ I(G) = (x_1x_2, x_2x_3, x_3x_4, x_4x_5, x_5x_1) = (x_1,x_2,x_4) \cap (x_1,x_3,x_5) \cap (x_1,x_3,x_4) \cap (x_2,x_3,x_5) \cap (x_2,x_4,x_5). \]

\begin{Example}[\bf Complete Bipartite Graphs]{\rm

A complete bipartite graph $K_{\alpha,\beta}$ is a graph whose set of  vertices is decomposed into two disjoint sets such that no two  vertices within the same set are adjacent and that every pair of  vertices in the two sets are adjacent.

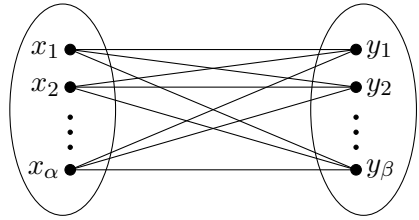
\begin{figure}[h]
	\begin{tikzpicture}
	\draw (0,0) ellipse (20pt and 40pt);
	\draw (4,0) ellipse (20pt and 40pt);	
	\filldraw (0.1,.8) circle (2pt) node[anchor=east]{$x_1$};
	\filldraw (0.1,.3) circle (2pt) node[anchor=east]{$x_2$};	
	\filldraw (0.1,-.1) circle (.8pt);	
	\filldraw (0.1,-.3) circle (.8pt);
	\filldraw (0.1,-.5) circle (.8pt);
	\filldraw (0.1,-.8) circle (2pt) node[anchor=east]{$x_\alpha$};	
	\filldraw (3.9,.8) circle (2pt) node[anchor=west]{$y_1$};
	\filldraw (3.9,.3) circle (2pt) node[anchor=west]{$y_2$};	
	\filldraw (3.9,-.1) circle (.8pt);	
	\filldraw (3.9,-.3) circle (.8pt);
	\filldraw (3.9,-.5) circle (.8pt);
	\filldraw (3.9,-.8) circle (2pt) node[anchor=west]{$y_\beta$};	
	\draw (.1,.8) -- (3.9,.8);
	\draw (.1,.8) -- (3.9,.3);
	\draw (.1,.8) -- (3.9,-.8);
	\draw (.1,.3) -- (3.9,.8);
	\draw (.1,.3) -- (3.9,.3);
	\draw (.1,.3) -- (3.9,-.8);
	\draw (.1,-.8) -- (3.9,.8);
	\draw (.1,-.8) -- (3.9,.3);
	\draw (.1,-.8) -- (3.9,-.8);
	\end{tikzpicture}
	\caption{$K_{\alpha,\beta}$}
\end{figure}

Let $S=k[x_1, \ldots, x_\alpha, y_1, \ldots, y_\beta],$  where $3 \leq \alpha \leq \beta.$ The edge ideal of $K_{\alpha,\beta}$ is the ideal $I = \big(x_i y_j \mid 1 \leq i \leq \alpha, 1\leq j \leq \beta \big).$ Observe that $R=S/I$ is a $\beta$-dimensional ring. Let  $P_1 = (x_1, \ldots, x_\alpha)$, $P_2 = (y_1, \ldots, y_\beta).$ Then $I=P_1 \cap P_2$. Note that $I$ is the Stanley-Reisner ideal of the union of an $\alpha$-simplex and a $\beta$-simplex. 

In order to find the $a$-invariants  we consider the following short exact sequence. 
\begin{align*}
	0 \longrightarrow \frac{S}{P_1 \cap P_2} \longrightarrow \frac{S}{P_1} \bigoplus \frac{S}{P_2} \longrightarrow \frac{S}{P_1+P_2} \longrightarrow 0
\end{align*}
Using the corresponding long exact sequence of local cohomology modules, it follows that 
\[ \HH^1_{\nn}(R) \simeq \HH^0_{\mm}\left(\frac{S}{P_1+P_2}\right), \, \HH^\alpha_{\nn}(R) \simeq \HH^\alpha_{(x_1,\ldots x_\alpha)}(k[x_1,\ldots, x_\alpha]) \text{ and } \HH^\beta_{\nn}(R) \simeq \HH^\beta_{(y_1,\ldots, y_\beta)}(k[y_1,\ldots, y_\beta]). \]
Therefore, $a_1(R) = 0$, $a_\alpha(R)=-\alpha$ and $a_\beta(R)=-\beta.$ Hence, $\delta = \max\{ |a_i(R)| \colon a_i(R) \neq - \infty \} = \beta.$ Since $a_\beta(R)<0$, using Theorem \ref{NCM} it follows that for all $s > \beta,$
\begin{align*}
	\ell\left(\frac{\mathcal{R}(\nn)}{(\nn,\nn t)^{[s]}}\right)
	&= 2 \ \sum_{n=1}^{s-1} \ell\left(\frac{S}{I+\mm^{[s]}\mm^n}\right) 
	+ \sum_{n=s}^{(\beta-1)s-1} \ell\left(\frac{S}{I+\mm^{[s]}\mm^n}\right) 
	- \sum_{n=1}^{\beta s-1} \ell\left(\frac{R}{\nn^n} \right) + 2 \ \ell\left(\frac{R}{\nn^{[s]}}\right).
\end{align*}
From Corollary \ref{cor1:alpha2}, Theorem \ref{faceringpoly} and Theorem \ref{conca}, we obtain
\begin{align*}
	\ell\left(\frac{\mathcal{R}(\nn)}{(\nn,\nn t)^{[s]}}\right)
	&= 2 \sum_{n=1}^{s-1} \left[ \ell\left(\frac{S}{P_1+\mm^{[s]}\mm^n}\right) + \ell\left(\frac{S}{P_2+\mm^{[s]}\mm^n}\right) - \ell\left(\frac{S}{P_1+P_2+\mm^{[s]}\mm^n}\right) \right] \\
	&+ \sum_{n=s}^{(\beta - 1) s-1} \left[ \ell\left(\frac{S}{P_1+\mm^{[s]}\mm^n}\right) + \ell\left(\frac{S}{P_2+\mm^{[s]}\mm^n}\right) - \ell\left(\frac{S}{P_1+P_2+\mm^{[s]}\mm^n}\right) \right] \\
	&- \sum_{n=1}^{\beta s-1} \left[ \sum_{i=0}^{\beta} (-1)^i \frac{\h^{(i)}(1)}{i!} \binom{n+\beta-i-1}{\beta-i} \right] + 2 \sum_{i=0}^{\beta} f_{i-1} (s-1)^i.
\end{align*}

As the $f$-vector is $f(\Delta) = \left(1,\alpha+\beta,\binom{\alpha}{2}+\binom{\beta}{2},\ldots,\binom{\alpha}{\alpha}+\binom{\beta}{\alpha}, \binom{\beta}{\alpha+1},\ldots,\binom{\beta}{\beta}\right)$ and the $h$-vector can be computed using \cite[Lemma 5.1.8]{brunsHerzog}, substituting the values and using Corollary \ref{corF(n)}, it follows that for all $s > \beta,$
\begin{align*}
	\ell\left(\frac{\mathcal{R}(\nn)}{(\nn,\nn t)^{[s]}}\right)
	&= 2 \sum_{n=1}^{s-1} \bigg[ s^\beta + \beta \binom{n+\beta-1}{\beta} + s^\alpha + \alpha \binom{n+\alpha-1}{\alpha} - 1 \bigg] \\
	&+  \sum_{n=s}^{(\beta-1)s-1} \bigg[ s^\beta + \sum_{i=1}^{\beta-1} (-1)^{i+1} \binom{\beta}{i} \binom{n-(i-1)s+\beta-1}{\beta} - 1 \bigg] \\
	&+ \sum_{n=s}^{(\alpha-1)s-1} \bigg[ s^\alpha + \sum_{i=1}^{\alpha-1} (-1)^{i+1} \binom{\alpha}{i} \binom{n-(i-1)s+\alpha-1}{\alpha} \bigg] + \sum_{n=(\alpha-1)s}^{(\beta-1)s-1} \binom{n+s+\alpha-1}{\alpha} \\
	&- \sum_{n=1}^{\beta s-1} \bigg[ \sum_{i=0}^{\beta} (-1)^i \frac{\h^{(i)}(1)}{i!} \binom{n+\beta-i-1}{\beta-i} \bigg] + 2 \bigg[ 1+ \sum_{i=1}^{\beta} \binom{\beta}{i} (s-1)^i + \sum_{i=1}^{\alpha} \binom{\alpha}{i} (s-1)^i  \bigg].
\end{align*}

In particular, when $\alpha=3$ and $\beta=4,$ we obtain that for all $s > 4,$
\begin{align*}
	\ell\left(\frac{\mathcal{R}(\nn)}{(\nn,\nn t)^{[s]}}\right)
	&= \frac{61}{30} s^5 + \frac{19}{24} s^4 - \frac{1}{12} s^3 - \frac{7}{24} s^2 - \frac{9}{20} s -1.
\end{align*}

}\end{Example}

Sometimes, certain invariants of the Stanley-Reisner ring may depend on the characteristic of the ring. Triangulation of the real projective plane is one such example where the Cohen-Macaulay property of the ring is characteristic dependent. We prove that in this example, the Hilbert-Kunz function is characteristic independent. 

\begin{Example}[\bf Triangulation of real projective plane]{\rm
Let $\Delta$ be the triangulation of the real projective plane.
\begin{figure}[h]
	\begin{tikzpicture}	
	\draw[fill=gray!20!white, draw=black] (0,0) -- (-1.4,.8) -- (-1.4,2.4) -- (0,3.1) -- (1.4,2.4) -- (1.4,0.8) -- cycle;
	\filldraw (0,0) circle (2pt) node[anchor=north]{$b$};
	\filldraw (-1.4,0.8) circle (2pt) node[anchor=east]{$a$};	
	\filldraw (0,1) circle (2pt) node[anchor=south]{$e$};	
	\filldraw (1.4,0.8) circle (2pt) node[anchor=west]{$c$};
	\filldraw (-.6,2) circle (2pt) node[anchor=south]{$d$};
	\filldraw (.6,2) circle (2pt) node[anchor=south]{$f$};	
	\filldraw (-1.4,2.4) circle (2pt) node[anchor=east]{$c$};
	\filldraw (1.4,2.4) circle (2pt) node[anchor=west]{$a$};	
	\filldraw (0,3.1) circle (2pt) node[anchor=south]{$b$};	
	\draw (0,1) -- (-0.6,2) -- (.6,2) -- cycle;
	\draw (0,1) -- (-1.4,.8);
	\draw (0,1) -- (0,0);
	\draw (0,1) -- (1.4,.8);
	\draw (-.6,2) -- (-1.4,.8);
	\draw (-.6,2) -- (-1.4,2.4);
	\draw (-.6,2) -- (0,3.1);
	\draw (.6,2) -- (1.4,.8);
	\draw (.6,2) -- (1.4,2.4);
	\draw (.6,2) -- (0,3.1);
	\end{tikzpicture}
\end{figure}

Let $k$ be a field and $R$ be the corresponding Stanley-Reisner ring of $\Delta.$ It is known that $R$ is Cohen-Macaulay if and only if $\chara k \neq 2.$ The $f$-vector of $R$ is $f(\Delta) = (1,6,15,10)$ and $h$-vector of $R$ is $h(\Delta)=(1,3,6,0).$ Let $\chara k \neq 2.$ Then $R$ is Cohen-Macaulay and $n(\nn)=-1.$ Using \texttt{Macaulay2} code, we obtain that for $s \geq 1$,
\begin{align} \label{triang}
	\ell\left(\frac{\mathcal{R}(\nn)}{(\nn,\nn t)^{[s]}}\right)
	= 390 \binom{s+3}{4} - 720 \binom{s+2}{3} + 372 \binom{s+1}{2} - 41s.
\end{align}

\vskip2mm
We save the code in a file named as \texttt{HKPolySC.m2} and make the following session in Macaulay2.

{\small\begin{verbatim}
	
i1 : S = QQ[a..f];
i2 : I = ideal"abe, ade, acd, bcd, bdf, abf, acf, cef, bce, def";
i3 : needsPackage"Depth"
i4 : needsPackage"SimplicialComplexes"
i5 : needsPackage"SimplicialDecomposability"
i6 : load"HKPolySC.m2"
i7 : HKPolySC(I)
The postulation number is: -1
Enter a number bigger than or equal to the absolute value of the postulation number: 2
The value of the Hilbert-Kunz function at the point 2 is: 104
Do you wish to enter one more point? (true/false): true
Enter a number bigger than or equal to the absolute value of the postulation number: 3
The value of the Hilbert-Kunz function at the point 3 is: 759
Do you wish to enter one more point? (true/false): true
Enter a number bigger than or equal to the absolute value of the postulation number: 4
The value of the Hilbert-Kunz function at the point 4 is: 2806
Do you wish to enter one more point? (true/false): true
Enter a number bigger than or equal to the absolute value of the postulation number: 5
The value of the Hilbert-Kunz function at the point 5 is: 7475
Do you wish to enter one more point? (true/false): true
Enter a number bigger than or equal to the absolute value of the postulation number: 6
The value of the Hilbert-Kunz function at the point 6 is: 16386
Do you wish to enter one more point? (true/false): false

\end{verbatim}}

One may check that if $\chara k =2$, then the $a$-invariant of $R$ is negative and $\depth (R)=2.$ Using Theorem \ref{NCM} it follows that $\ell(\R(\nn)/(\nn,\nn t)^{[s]})$ has the same formula as in (\ref{triang}) for $s > \delta$, where $\delta = \max\{ |a_2(R)|, |a_3(R)| \}.$ This proves that the Hilbert-Kunz function is characteristic independent in this example. 

}\end{Example}

\section{Macaulay2 code for Cohen-Macaulay Stanley-Reisner rings}

In this section we present a Macaulay2 code which uses the idea of Theorem \ref{CM} to calculate the value of the generalized Hilbert-Kunz function at a point. The code requires Macaulay2 packages SimplicialComplexes, SimplicialDecomposability and Depth. The code accepts the Stanley-Reisner ideal as an input. It then calculates the postulation number, after ensuring that the corresponding ring is Cohen-Macaulay, and prompts the user to enter a point according to the postulation number calculated. The value of the generalized Hilbert-Kunz function at the point is produced as an output and the user is given a choice to enter more points.

\smallskip
\begin{verbatim}
	
HKPolySC = (SCIdeal) -> (
polyRing := ring SCIdeal;

\end{verbatim}

{\bf Step 1}: Check if the Stanley-Reisner ring is Cohen-Macaulay

\begin{verbatim}

if isCM(polyRing/SCIdeal) == false then error "Stanley-Reisner ring is not Cohen-Macaulay";

dimSC := dim (polyRing/SCIdeal);
SComplex := simplicialComplex monomialIdeal SCIdeal;
fvect := fVector(SComplex);
hvect := hVector(SComplex);

\end{verbatim}

{\bf Step 2}: Calculate the derivatives of the polynomial corresponding to the $h$-vector at 1

\begin{verbatim}

Diffh = (i) -> (
  TT := QQ[tt];
  hPoly := sum(0..dimSC, j -> (hvect#j)*(tt^j));
  for j from 1 to i do (
    hPoly = diff(tt, hPoly)
  );
  sub(sub(hPoly, TT/(tt-1)), QQ)
);

\end{verbatim}

Find the list of minimal primes

\begin{verbatim}
	
MinPrimeList := primaryDecomposition SCIdeal;
numPrime := #MinPrimeList;

\end{verbatim}

Ring required for the output polynomial

\begin{verbatim}

OutputRing = QQ[s]; 

\end{verbatim}

Redefining the binomial function

\begin{verbatim}
	
binom = (aa, bb) -> (
  if aa > 0 then return binomial(aa,bb)
  else if (aa == 0 and bb == 0) then return 1
  else return 0
);

\end{verbatim}

{\bf Step 3}: Calculate and print the postulation number 

\begin{verbatim}

PostNum := -position(toList apply(0..dimSC, i-> dimSC-i), i-> hvect#i !=0);
<<"The postulation number is: "<< PostNum <<endl;

\end{verbatim}

{\bf Step 4}: Obtain the point from the user as an input and calculate the Hilbert-Kunz polynomial at that point

\begin{verbatim}

pointer := true; 
while pointer == true do(
  point = read "Enter a number bigger than or equal to the absolute value of the 
  postulation number: ";
  point = value point;    

\end{verbatim}

{\bf Step 5}: The function \texttt{FunctionF} calculates length as in Corollary 3.1. 

\begin{verbatim}

  FunctionF = (QtI, n) -> (									
    dimQt = dim (polyRing/QtI);
    use OutputRing;
    if dimQt == 0 then return 1
    else if dimQt == 1 then return point + n
    else if dimQt == 2 then (
      if n <= point then return point^2 + n^2 + n
      else return (n + point + 1)*(n + point)/2
    )
    else (
      if n <= point then return point^dimQt + dimQt*binom(n+dimQt-1,dimQt)
      else if (point+1 <= n and n <= (dimQt-1)*point-1) then 
        return point^dimQt + sum(1..(dimQt-1), i ->
          ((-1)^(i+1))*binom(dimQt,i)*binom(n-(i-1)*point+dimQt-1,dimQt))
      else return binom(n+point+dimQt-1,dimQt) 
    )
  );

\end{verbatim}

{\bf Step 6}: The function \texttt{AltSumLength} calculates length as in Corollary 3.3

\begin{verbatim}

  AltSumLength = (n) -> (
    polySum = 0;
    for i from 1 to numPrime do (
      CL := subsets(numPrime, i);
      midSum = 0;
      for j from 0 to #CL-1 do (
        midIdeal = sum(0..(i-1), k -> MinPrimeList#(CL#j#k));
        midSum = midSum + FunctionF(midIdeal, n);
      );
      polySum = polySum + (-1)^(i+1)*midSum;
    );
    polySum
  );

\end{verbatim}

{\bf Step 7}: Calculate the Hilbert-Kunz polynomial at the point

\begin{verbatim}
	
  use OutputRing;
  if PostNum == 0 then 
  polyPoint = 2*sum(1..(point-1), n -> AltSumLength(n)) 
    + sum(point..(dimSC*point-1), n -> AltSumLength(n)) 
    - sum(1..((dimSC+1)*point-1), n ->
      sum(0..dimSC,i->(-1)^i*Diffh(i)*(1/i!)*binom(n+dimSC-i-1,dimSC-i))) 
    + 2*sum(0..dimSC, n -> (fvect#(n-1))*(point-1)^n)
  else (
    polyPoint1 = 2*sum(1..(point-1), n -> AltSumLength(n)); 
    if (dimSC == 2) then (polyPoint2 = 0;)
    else (polyPoint2 = sum(point..((dimSC-1)*point-1), n -> AltSumLength(n)););  
    polyPoint3 = sum(1..(dimSC*point-1), n -> 
      sum(0..dimSC, i -> (-1)^i*Diffh(i)*(1/i!)*binom(n+dimSC-i-1,dimSC-i)));
    polyPoint4 = 2*sum(0..dimSC, n -> (fvect#(n-1))*((point-1)^n)); 
    polyPoint = polyPoint1 + polyPoint2 - polyPoint3 + polyPoint4;
  );	
  <<"The value of the Hilbert-Kunz polynomial at the point " << point << " is: " 
  << polyPoint << endl;	

  pointer = read "Do you wish to enter one more point? (true/false): ";
  pointer = value pointer;
  )
)
\end{verbatim}

If the $a$-invariant of the ring is known, the above code can also be used for the non Cohen-Macaulay case with minor modifications.

\bibliographystyle{amsplain}

\begin{thebibliography}{10}
	
	\bibitem{brunsHerzog}
	Winfried Bruns and J\"urgen Herzog. 
	\newblock \textit{Cohen-Macaulay rings}, volume 39 of \textit{Cambridge Studies in Advanced Mathematics}. 
	\newblock Cambridge University Press, Cambridge, 1993.

	\bibitem{conca}
	Aldo Conca.
	\newblock Hilbert-Kunz function of monomial ideals and binomial hypersurfaces.
	\newblock {\em Manuscripta Math.}, 90(3):287--300, 1996.

	
	\bibitem{ey}
	Kazufumi Eto and Ken-ichi Yoshida.
	\newblock  Notes on Hilbert-Kunz multiplicity of Rees algebras.
	\newblock {\em Comm. Algebra}, 31(12):5943--5976, 2003. 
	
	\bibitem{hoa}
	Le Tuan Hoa.
	\newblock Reduction numbers and Rees algebras of powers of an ideal.
	\newblock {\em Proc. Amer. Math. Soc.}, 119(2):415--422, 1993.
	
	\bibitem{gkv}
	Kriti Goel, Mitra Koley, and J. K. Verma.
	\newblock Hilbert-Kunz function and Hilbert-Kunz multiplicity of some ideals of the Rees algebra.
	\newblock arXiv preprint {\em arXiv:1911.03889}.
	
	\bibitem{gmv}
	Kriti Goel, Vivek Mukundan and J. K. Verma.
	\newblock Tight closure of powers of ideals and tight Hilbert polynomials.
	\newblock {\em Mathematical Proceedings of the Cambridge Philosophical Society}, 1-21.
	
	\bibitem{h}
	Craig Huneke.
	\newblock Hilbert-Kunz multiplicity and the F-signature.
	\newblock {\em Commutative algebra},  485--525, Springer, New York, 2013.
	
	\bibitem{k}
	Ernst Kunz.
	\newblock On Noetherian rings of characteristic $p$. 
	\newblock {\em Amer. J. Math.},  98(4):999--1013, 1976. 
	
	\bibitem{marleyThesis}
	Thomas~John Marley.
	\newblock {\em Hilbert functions of ideals in {C}ohen-{M}acaulay rings}.
	\newblock ProQuest LLC, Ann Arbor, MI, 1989.
	\newblock Thesis (Ph.D.)--Purdue University.
	
	\bibitem{pm}
	Paul Monsky.
	\newblock  The Hilbert-Kunz function.
	\newblock {\em  Math. Ann.}, 263(1):43--49, 1983.
	
	
\bibitem{Stanley}
   Richard P. Stanley.
	\newblock {\em Combinatorics and Commutative Algebra. Second edition,}
	\newblock Progress in Mathematics 
	\newblock Birkh\"auser
	
\bibitem{stanley97}
	Richard P. Stanley.
\newblock {\em Enumerative combinatorics. Vol. 1},
\newblock volume 49 of {\em Cambridge Studies in Advanced Mathematics}.
	\newblock Cambridge University Press, Cambridge, 1997.
	\newblock With a foreword by Gian-Carlo Rota, Corrected reprint of the 1986 original.

\end{thebibliography}

\end{document}